\newtheorem{theorem}{Theorem}[section]
\newtheorem{proposition}[theorem]{Proposition}
\newtheorem{lem}[theorem]{Lemma}
\newtheorem{definition}[theorem]{Definition}
\newtheorem{assumption}[theorem]{Assumption}
\newtheorem{remark}[theorem]{Remark}
\newtheorem{notation}[theorem]{Notation}
\newtheorem{example}[theorem]{Example}
\newcommand{\bfF}{{\bf F}}
\newcommand{\bfV}{{\bf V}}
\newcommand{\Gal}{\operatorname{Gal}}
\newcommand{\ds}{\displaystyle}
\newcommand{\Tr}{\operatorname{Tr}}
\newcommand{\Sel}{\operatorname{Sel}}
\newcommand\Selr{\Sel_{\text{rel}}}
\newcommand{\Hom}{\operatorname{Hom}}
\newcommand{\End}{\operatorname{End}}
\newcommand\rat{{\mathbb Q}}
\newcommand\rank{\operatorname{rank}}
\newcommand\corank{\operatorname{corank}}
\newcommand\Proj{\operatorname{Proj}}
\newcommand\Qp{{\mathbb Q_p}}
\newcommand\Zp{{\mathbb Z_p}}
\newcommand\Z{\mathbb Z}
\newcommand{\bfA}{\mathbf A}
\newcommand{\Dieudonne}{\mathbf D}
\newcommand\mfp{\mathfrak p}
\newcommand\mfq{\mathfrak q}
\newcommand\mfA{\mathbf A}
\newcommand\mm{\mathfrak m}
\newcommand\bfx{\mathbf x}
\newcommand\bfy{\mathbf y}
\newcommand\bfm{\mathbf m}
\newcommand\bfl{\mathbf l}
\newcommand\Q{\mathbb Q}
\newcommand\C{\mathbb C}
\newcommand\bfL{\mathbf L}
\newcommand\OO{\mathcal O}
\newenvironment{nouppercase}{%
  \renewcommand{\uppercasenonmath}[1]{}}{}
\subjclass[2010]{Primary 11R23, 11G10}
\newcommand\Label{\label}
\title[Ranks of Jacobians]{Ranks of rational points of the Jacobian varieties of hyperelliptic curves}
\thanks{Bo-Hae Im is supported by Basic Science Research Program through the National Research Foundation of Korea (NRF) funded by the Ministry of Education (NRF-2014R1A1A2053748). }
\author{Bo-Hae Im}
\address{Department of Mathematical Sciences, KAIST, 291, Daehak-Ro, Yuseong-Gu, Daejeon, 34141, Republic of Korea}
\email{bhim@kaist.ac.kr}
\author{Byoung Du Kim}
\address{School of Mathematics and Statistics, Victoria University of Wellington, Wellington 6140, New Zealand}
\email{byoungdu.kim@vuw.ac.nz}
\date{\today}
\begin{document}

\begin{abstract}
In this paper, we obtain bounds for the Mordell-Weil ranks over cyclotomic extensions of a wide range of abelian varieties defined over a number field $F$ whose primes above $p$ are totally ramified over $F/\Q$. We assume that the abelian varieties may have good non-ordinary reduction at those primes. Our work is a generalization of \cite{Kim}, in which the second author generalized Perrin-Riou's Iwasawa theory for elliptic curves over $\Q$ with supersingular reduction (\cite{Perrin-Riou}) to elliptic curves defined over the above-mentioned number field $F$. On top of non-ordinary reduction and the ramification of the field $F$, we deal with the additional difficulty that the dimensions of the abelian varieties can be any number bigger than 1 which causes a variety of issues. As a result, we obtain bounds for the ranks over cyclotomic extensions $\Q(\mu_{p^{\max(M,N)+n}})$ of the Jacobian varieties of {\it ramified} hyperelliptic curves $y^{2p^M}=x^{3p^N}+ax^{p^N}+b$ among others.

\end{abstract}
\begin{nouppercase}
\maketitle
\end{nouppercase}

\begin{nouppercase}
\begin{section}{Introduction}
In this paper, we construct an (Iwasawa) theory in the spirit of \cite{Mazur} and \cite{Perrin-Riou}, and obtain bounds for the Mordell-Weil ranks of abelian varieties. Our first model is Barry Mazur's work, \cite{Mazur}, in which he studied abelian varieties $A/F$ with good ordinary reduction at every prime of $F$ above a prime $p$. Another model we follow closely is Perrin-Riou's work, \cite{Perrin-Riou}, in which she studied elliptic curves defined over $\Q$ with good supersingular reduction, and succeeded in overcoming many difficulties due to the supersingular reduction type.

To explain the relevance of their work to our work, suppose that $A$ is an abelian variety defined over a number field $F$, and $F_{\infty}$ is a $\Zp$-extension of $F$ (i.e., $\Gal(F_{\infty}/F)\cong \Zp$) for a prime $p$. When $A$ has good ordinary reduction at every prime of $F$ lying above $p$ (i.e., $A$ has good reduction at such a prime, and its associated formal group is of multiplicative type), the celebrated work (\cite{Mazur}) provides a criterion for whether $A(F_{\infty})$ has a finite rank or not. It is a great example of the potential strength of Iwasawa Theory.

Although a similar result is expected for abelian varieties generally regardless of the reduction type, how to prove it is not known in many cases if not in most cases. The main obstacle is that a formal group of non-multiplicative type does not admit a non-trivial universal norm.

One of the notable attempts in the case of the non-ordinary reduction was the aforementioned work of Perrin-Riou (\cite{Perrin-Riou}). 
Her insight was that we might be able to use Fontaine's theory of group schemes in a clever way to construct a series of local points which  satisfy a certain norm relation (but do not constitute a universal norm).

In particular, she obtained that for an elliptic curve $E/\Q$ with good supersingular reduction at $p$, if $\bfL_{\alpha}\not=0$ (which is the ``algebraic'' $p$-adic $L$-function she constructed), then

$$\corank_{\Zp} \Sel_p(E/\Q_n) \leq (p-1)(p^{n-1}+p^{n-2}+\cdots+ p^m)+C,$$ 
where $C$ is some fixed constant, and $n-m=\frac n2 +O(1)$. Most surprisingly, when $a_p(E)=1+p-E(\Z/p\Z)=0$, she applied a more refined idea, and obtained that $\rank (E/\Q_{\infty})$ is finite just as Mazur showed for $E/\Q$ with good ordinary reduction at $p$. Now, we have a more sophisticated Iwasawa theory of S. Kobayashi (\cite{Kobayashi}) for $E/\Q$ in the case of the cyclotomic $\Zp$-extension of $\Q$ (and also note F. Sprung's work, \cite{Sprung}, which generalizes Kobayashi's work, which applies to most elliptic curves defined over $\Q$, to every elliptic curve defined over $\Q$). The work close to our work in its subject matter is \cite{Kazim-Lei}  (which crucially relied on \cite{Lei-Loeffler-Zerbes}) which generalized the aforementioned results to abelian varieties in a context different from that of this paper.

Our goal is to generalize her work to abelian varieties over a number field $F$ whose primes above $p$ are totally ramified over $F/\Q$. We assume that the abelian varieties may have non-ordinary reduction at primes above $p$ (but otherwise, have good reduction at every prime above $p$). We use the ideas of the second author's earlier paper (\cite{Kim}), in which he studied abelian varieties of dimension 1 (hence elliptic curves). Three main obstacles are the reduction type, the ramification of the field, and the dimensions of the abelian varieties. The last one is a new issue not covered by \cite{Kim}, and as we will argue, it is not a trivial one.

In \cite{Kim}, the second author found that by applying Fontaine's theory more rigorously and judiciously, Perrin-Rou's idea can be extended to elliptic curves $A$ defined over totally ramified fields $F$ (with emphasis on ``\textit{ramified}''). In the manner of Perrin-Riou, he constructed a $p$-adic power series $\bfL_{\alpha}$ for a root $\alpha$ of the characteristic of the Dieudonne module of $A$ ($\bfL_{\alpha}$ becomes an integral power series if $\alpha$ is a unit), and showed that if it is not $0$, then 
$$\corank \Sel_p(A/F_n) \leq (p-1)(p^{n-1}+p^{n-2}+\cdots+p^m)+C,$$ 
where $\lambda=v(\alpha)$, and $n-m=\lambda n+O(1)$. Also, under some conditions, he generalized Kobayashi's theory to  the above elliptic curves $A$. It should be seen as an effort to establish an Iwasawa theory that works well in a more general case.

In this paper, we generalize the result in \cite{Kim} to abelian varieties of any dimension. (We keep the condition that every prime of $F$ above $p$ is totally ramified over $F/\Q$.) 
%We hope that we will be able to remove this condition in our subsequent paper.) 
See Proposition~\ref{ZeroGo} for our main statement. Since we study abelian varieties of any dimension, we have the following issues: Our group schemes can have ``mixed reduction'' (i.e., a mix of ordinary and non-ordinary reduction), and different generators of the Dieudonne module have different ``minimal polynomials'', thus different series of local points we construct have different norm relations. This is all very different from abelian varieties of dimension 1 (i.e., elliptic curves) which has only two types of good reduction at a given prime: good ordinary, and good supersingular. And, their Dieudonne module is generated by one element over $\Dieudonne$. Thus, we resort to a rather complicated construction in Section~\ref{China}.

Our work involves constructing explicit logarithms, and as part of the work, we give an explicit and general definition of the constant term of the logarithm, without which the resulting local points do not satisfy the norm relations. One may define it as a number which happens to force the resulting local points to satisfy the norm relations, but our construction is more natural in the sense that it explains the rather mysterious existence of such constant terms.

As a result of our work, we obtain bounds for the Mordell-Weil ranks of abelian varieties in a wide range of cases. The bounds are given in terms of the dimension of the abelian varieties, and some other terms associated to their Dieudonne modules. In particular, for non-negative integers $N$ and $M$, we consider the Jacobian variety of the hyperelliptic curve,
$$C_N:y^2=x^{3p^N}+ax^{p^N}+b,$$ 
and the Jacobian variety of the curve, 
$$C_{M,N} : y^{2p^M}=x^{3p^N}+ax^{p^N}+b.$$

For the lack of better words, we call $C_{M,N}$ a {\it ramified hyperelliptic curve}. In the former, we suppose $C_N$ is defined over $F=\Q(\zeta_{p^N})$, and in the latter, over $F=\Q(\zeta_{p^M}, \zeta_{p^N})$. In both, we let $F_{\infty}=\Q(\zeta_{p^{\infty}})$, and $F_n$ be the field $F \subset F_n \subset F_{\infty}$ so that $\Gal(F_n/F) \cong \Z/p^n\Z$. We let $H$ be a number field so that every prime of $H$ above $p$ is unramified over $H/\Q$. (For example, we may assume $H$ is the field of complex multiplication if the curve has complex multiplication.)
%Subconsciously, we are thinking that $H$ is a field over which the Jacobian variety has complex multiplication. However, most abelian varieties do not have complex multiplication, so, readers may assume $H=\Q$, or simply ignore it altogether. 

Suppose $A$ is either the Jacobian variety of the curve above, or its dual abelian variety. We choose certain irreducible polynomials $q_1(x),\ldots, q_s(x)$ ($s=\dim A \cdot [HF:\Q]$) which are associated to the Dieudonne modules of $A$, and choose a zero $\alpha_i$ of each $q_i(x)$. (See the discussion after Assumption~\ref{Tyrol}.) And, we construct the algebraic $p$-adic $L$-function $\bfL_{ \{ \alpha_i \}_i}$ (see Definition~\ref{Madrid}). Its construction is necessarily more sophisticated than that of $\bfL_{\alpha}$ in \cite{Kim} because the Dieudonne module in this case has a higher dimension. In many ways, constructing $\bfL_{ \{ \alpha_i \}_i}$ is pivotal to our study. Then we obtain:

\begin{theorem}		\Label{Vienna}
Let $\bfA'=\Hom(\cup_n A[p^n], \Zp(1))$ where $\Zp(1)$ is the Tate twist of the trivial representation $\Zp$. (Equivalently, $\bfA'=\cup_n A^{\vee}[p^n]$ where $A^{\vee}$ is the dual abelian variety of $A$.) If $\bfL_{\{ \alpha_i \}_i} \not=0$, then for some fixed $C$,
$$ \corank_{\Zp} \Sel_p(\bfA'/H\cdot F_n) \leq \sum_{i=1}^s (p-1) \times \left( p^{n-1}+p^{n-2}+ \cdots+ p^{m_i} \right)+C,$$
as $n$ varies, where $n-m_i = \lambda_i n +O(1)$ and $\lambda_i=v_p(\alpha_i)$.

In particular,  let $\lambda$  be the maximum of $\lambda_1, \cdots, \lambda_s$. Then we have the following.

\begin{enumerate}[(a)]
\item If $A$ is the Jacobian variety $J_N/F$ of $C_N$, or its dual abelian variety $J_N^{\vee}$, then for some fixed $C$,
\begin{eqnarray*} &&\corank_{\Zp} \Sel_p(\bfA'/H\cdot F_n) \\&\leq& \frac{3p^N-1}{2} \cdot [H\cdot F_n:\Q] \cdot (p-1) \times \left( p^{n-1}+p^{n-2}+ \cdots+ p^m \right)+C,
\end{eqnarray*}
as $n$ varies, where $n-m = \lambda n +O(1)$.

\item If $A$ is the Jacobian variety $J_{M,N}$ of $C_{M,N}$, or its dual abelian variety $J_{M,N}^{\vee}$, then
		\begin{enumerate}
		\item[(i)] when $M>N$, for some fixed $C$,
\begin{eqnarray*} &&\corank_{\Zp} \Sel_p(\bfA'/H\cdot F_n) \\&\leq& \left( 1-2p^M+\frac{3p^N(2p^M-1)+p^M}{2} \right) \cdot [H\cdot F_n:\Q] \times	
 (p-1) \\
 & &\times \left( p^{n-1}+p^{n-2}+ \cdots+ p^m \right)+C,
\end{eqnarray*}
as $n$ varies, where $n-m = \lambda n +O(1)$, and

		\item[(ii)] when $M\leq N$,  for some fixed $C$,
\begin{eqnarray*} &&\corank_{\Zp} \Sel_p(\bfA'/H\cdot F_n)\\ &\leq& \left( 1-2p^M+\frac{3p^N(2p^M-1)+2p^M-p^N}{2} \right) \cdot [H\cdot F_n:\Q] \times (p-1)	\\
&&  \times \left\{ p^{n-1}+p^{n-2}+ \cdots+ p^m \right\}+C,
\end{eqnarray*}
as $n$ varies where $n-m = \lambda n +O(1)$.
		\end{enumerate}

\end{enumerate}
\end{theorem}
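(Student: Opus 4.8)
The proof of Theorem~\ref{Vienna} breaks into two distinct parts: establishing the general Selmer corank bound from the non-vanishing of $\bfL_{\{\alpha_i\}_i}$, and then specializing the combinatorial data to the Jacobians of $C_N$ and $C_{M,N}$. For the general bound, the plan is to follow the now-standard Perrin-Riou/Kobayashi strategy as adapted in \cite{Kim}, but with the extra bookkeeping forced by higher dimension. First I would use the explicitly constructed local points (the ones living over the tower $H\cdot F_n$ whose construction occupies Section~\ref{China}) together with the algebraic $p$-adic $L$-function $\bfL_{\{\alpha_i\}_i}$ to produce, for each root $\alpha_i$, a system of ``Coleman-type'' maps whose cokernel is controlled by the valuation $\lambda_i = v_p(\alpha_i)$. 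The non-vanishing hypothesis $\bfL_{\{\alpha_i\}_i} \neq 0$ is exactly what guarantees these maps are not identically zero, so that a Poitou--Tate/global duality argument bounds $\corank_{\Zp}\Sel_p(\bfA'/H\cdot F_n)$ by the sum of the local coranks coming from the $s$ components. Each component contributes a term of the shape $(p-1)(p^{n-1}+\cdots+p^{m_i})$, where $m_i$ is determined by the norm relation satisfied by the $i$-th system of local points; tracking the norm relation carefully gives $n - m_i = \lambda_i n + O(1)$. The $O(1)$ and the constant $C$ absorb the (bounded, independent of $n$) contributions from primes of bad reduction, the difference between $\Sel$ and its core, and the finite error terms in the index computations.

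The genuinely new work relative to \cite{Kim} is that the Dieudonné module $\Dieu$ of $A$ (base-changed appropriately over $HF$) is no longer generated by a single element over $\Dieu$, and the reduction can be ``mixed.'' So I would decompose $\Dieu$ according to the irreducible factors $q_1(x),\dots,q_s(x)$ of the relevant characteristic polynomial, with $s = \dim A \cdot [HF:\Q]$, and run the local-point construction factor-by-factor. For each $q_i$ one picks a root $\alpha_i$, forms the corresponding minimal-polynomial norm relation, and obtains a rank-one-over-$\Lambda$ worth of local points whose Coleman map has the expected cokernel size $\lambda_i n + O(1)$. Summing over $i$ gives the stated bound with the sum $\sum_{i=1}^s$. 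The point where one must be careful is that different $\alpha_i$ give different norm relations and hence different $m_i$, so the bounds do not collapse to a single clean expression unless one passes to $\lambda = \max_i \lambda_i$; this is precisely why parts (a) and (b) are phrased with a single $m$ defined by $n - m = \lambda n + O(1)$, which dominates each individual $n - m_i$.

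For the explicit consequences (a) and (b), the remaining task is purely a dimension count plus an identification of $s$. The genus of the hyperelliptic curve $y^2 = x^{3p^N} + ax^{p^N} + b$ is $\lfloor (3p^N - 1)/2 \rfloor = (3p^N-1)/2$ (since $p$ is odd, $3p^N$ is odd), so $\dim J_N = (3p^N-1)/2$, and over $H\cdot F_n$ one has $s = \dim J_N \cdot [HF:\Q]$; after relating $[HF:\Q]$ to $[H\cdot F_n:\Q]$ (the ratio is a power of $p$ absorbed into $C$, or more precisely one writes the bound with $[H\cdot F_n:\Q]$ directly) and bounding each $p^{m_i} \le p^m$, the sum $\sum_{i=1}^s (p-1)(p^{n-1}+\cdots+p^{m_i})$ is at most $s \cdot (p-1)(p^{n-1}+\cdots+p^m)$, which is the displayed inequality in (a). For $C_{M,N}$ the only change is computing the genus of $y^{2p^M} = x^{3p^N} + ax^{p^N} + b$: this is a superelliptic curve, and its genus is obtained from the Riemann--Hurwitz formula applied to the degree-$2p^M$ map to the $x$-line (or equivalently from a Newton-polygon/Riemann--Hurwitz count of ramification over $x = \infty$ and over the roots of $x^{3p^N}+ax^{p^N}+b$), which yields the two case-dependent expressions $1 - 2p^M + \frac{3p^N(2p^M-1)+p^M}{2}$ when $M > N$ and $1 - 2p^M + \frac{3p^N(2p^M-1)+2p^M - p^N}{2}$ when $M \le N$ — the two cases arising from whether the ramification behavior at $x=\infty$ is governed by $p^M$ or by $p^N$. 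Substituting these genus values for $\dim A$ and multiplying by $[H\cdot F_n : \Q]$ gives (b)(i) and (b)(ii).

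The main obstacle, I expect, is not any single step but rather setting up the factor-by-factor local-point construction in Section~\ref{China} so that the norm relations are compatible with Fontaine's theory in the mixed-reduction case, and in particular pinning down the correct constant term of each logarithm: as the introduction emphasizes, without the right constant term the local points fail the norm relations outright, so the delicate part is the explicit description of these constants and the verification that the resulting Coleman maps have cokernel of the predicted size $\lambda_i n + O(1)$ uniformly in $n$. The genus computations, by contrast, are routine once the right ramification data is written down.
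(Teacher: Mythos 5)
Your proposal is correct and follows essentially the same route as the paper: Theorem~\ref{Vienna} is obtained there by combining the general corank bound of Proposition~\ref{ZeroGo} (resting on the local points of Section~\ref{China} and the Perrin-Riou-style arguments you outline) with the Riemann--Hurwitz genus computation of Lemma~\ref{Tripoli}, then bounding each sum $p^{n-1}+\cdots+p^{m_i}$ by $p^{n-1}+\cdots+p^{m}$ with $\lambda=\max_i\lambda_i$. The only caveats are cosmetic: it is the full sums, not the individual terms $p^{m_i}$, that get compared (since $m\leq m_i$), and the passage from $[H\cdot F:\Q]$ to $[H\cdot F_n:\Q]$ simply enlarges the bound rather than being ``absorbed into $C$''.
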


\vspace{1.5mm}

Theorem~\ref{Vienna} follows from Proposition~\ref{ZeroGo} and Lemma~\ref{Tripoli}. As noted in \cite{Kim}, $\bfL_{\{ \alpha_i \}} \not=0$ if $\Sel_p(\bfA'/H\cdot F_n)^{\chi}$ is finite for any $n$ and any character $\chi$ of $\Gal(H\cdot F_n/H\cdot F)$.

Note that $\End_F (J_N)$ contains $\Z[\zeta_{p^N}]$, and $\End_F (J_{M,N})$ contains $\Z[\zeta_{p^M}] \times \Z[\zeta_{p^N}]$. It is illustrative that the bounds in Theorem~\ref{Vienna}~(a) and (b) are proportional to the ranks of $\Z[\zeta_{p^N}]$ or $\Z[\zeta_{p^M}] \times \Z[\zeta_{p^N}]$. We note that the bounds in Theorem~\ref{Vienna} are probably better than the bounds obtainable by previously known Iwasawa theory methods in our case.

\end{section}

%%%%%%%%%%%%%%%%%%%%%%%%%%%%%%

\begin{section}{Constructing local points of group schemes of higher dimension}

\Label{China}

For Fontaine's theory of group schemes over local fields, refer to the original reference \cite{Fontaine}, or \cite{Kim}. We use the notations and definitions the second author used in \cite{Kim}~Section~3 (and we do not repeat them). As in \cite{Kim}, we may drop $k$ from $\Dieudonne_k=W[\bfF, \bfV]$ if $k=\Z/p\Z$.

We recall the Dieudonne module $M=M(G)$ of a group scheme $G$ over a finite field $k$. Also recall that where $K'$ is a (possibly ramified) extension of $\Qp$, $G$ is a smooth group scheme over $\OO_{K'}$, and $G_{/k}$ is the reduced group scheme over the residue field $k$, $L \subset M(G_{/k})_{\OO_{K'}}$ is the set of logarithms of $G$.

\begin{subsection}{}			\Label{Ordinary part}
Let $K'$ be a totally ramified extension of $\Qp$, and $G$ be a smooth formal group scheme over $\OO_{K'}$ whose reduction is also smooth. Let $L$ and $M$ be respectively the set of logarithms of $G$, and the Dieudonne module of $G$.

Assume that $M$ is torsion-free. In this section, we study a method to generate local points of $G$.

Naturally, we will build on the ideas of \cite{Kim}, but working with formal groups of higher dimensions require new ideas. There are several issues. A lesser one is that $M$ and $L$ are not generated by one element. A bigger issue is that ordinary reduction types and non-ordinary reduction types are mixed in general, and cannot be easily separated to the best of our knowledge. The method for group schemes of dimension 1 would naturally extend to group schemes of any dimension if they are (over the given field) isogenous to products of group schemes of dimension 1. But every group scheme may not be of this kind. So, we need to develop a more sophisticated method.

Finally, unlike the dimension one case, each generator of $M$ may have a different ``minimal polynomial'' (which we define later), and the minimal polynomial may not be irreducible.

We will present our solutions to these issues. First we note the following:

\begin{proposition}		\Label{Optical Fiber}
$M\otimes \Qp \cong \Big(M^{ord} \otimes \Qp\Big) \times \Big( M^{non-ord} \otimes \Qp\Big)$ for some finitely generated $\Dieudonne$-modules $M^{ord}$ and $M^{non-ord}$ so that all the eigenvalues of $\bfF$ acting on $M^{ord}$ have valuation 1, and all the eigenvalues of $\bfF$ acting on $M^{non-ord}$ have valuation less than 1.
\end{proposition}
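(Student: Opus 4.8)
The plan is to decompose $M \otimes \Qp$ according to the slopes of the Frobenius operator $\bfF$, isolating the slope-$1$ part (which will be $M^{ord} \otimes \Qp$) from the part of slopes strictly less than $1$ (which will be $M^{non-ord} \otimes \Qp$). First I would recall that since $M$ is a finitely generated torsion-free $\Dieudonne$-module and $\Dieudonne = W[\bfF,\bfV]$ with $\bfF\bfV = \bfV\bfF = p$, the operator $\bfF$ acts on the finite-dimensional $\Qp$-vector space $V := M \otimes_{\Zp} \Qp$ (or rather $M \otimes_{W} \overline{\Qp}$ after base change, to split the characteristic polynomial) as a $\sigma$-semilinear injective endomorphism, equivalently $V$ carries the structure of an isocrystal. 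The eigenvalues of $\bfF$ — more precisely the slopes occurring in the Dieudonné–Manin decomposition of the isocrystal — are $p$-adic numbers with valuations in $[0,1]$ (this range is forced by the existence of both $\bfF$ and $\bfV = p\bfF^{-1}$ integrally, so $0 \le v_p(\text{slope}) \le 1$). By the Dieudonné–Manin classification, $V \otimes \overline{\Qp}$ splits as a direct sum of simple isocrystals indexed by their slopes, and we can group these into the slope-exactly-$1$ summand and the slopes-less-than-$1$ summand.

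The key steps, in order: (1) base change to split the characteristic polynomial of $\bfF$ and invoke the slope decomposition of the isocrystal $(V, \bfF)$, writing $V = V^{=1} \oplus V^{<1}$ where $V^{=1}$ collects the generalized eigenspaces whose eigenvalues have valuation $1$ and $V^{<1}$ collects those with valuation $<1$; this decomposition is $\bfF$-stable and hence $\bfV$-stable (since $\bfV = p\bfF^{-1}$ preserves any $\bfF$-stable subspace on which $\bfF$ is invertible, which it is on all of $V$ because $M$ is torsion-free). (2) Check the decomposition is defined over $\Qp$, not just $\overline\Qp$: the set of eigenvalues of valuation $1$ is Galois-stable (valuation is Galois-invariant), so $V^{=1}$ and $V^{<1}$ descend to $\Qp$-subspaces, and each is automatically a $\Dieudonne$-submodule of $V$ since $\Dieudonne$ is generated by $W$, $\bfF$, $\bfV$. (3) Set $M^{ord}$ to be a $\Dieudonne$-lattice inside $V^{=1}$ — e.g.\ $M \cap V^{=1}$ — and likewise $M^{non-ord} := M \cap V^{<1}$; both are finitely generated $\Dieudonne$-modules, and since the direct sum $V^{=1} \oplus V^{<1}$ equals $V$, we get $M \otimes \Qp \cong (M^{ord}\otimes\Qp) \times (M^{non-ord}\otimes\Qp)$, with the eigenvalue/slope conditions holding by construction.

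The main obstacle I expect is step (2) combined with making sure the word ``eigenvalue'' is interpreted correctly for a semilinear operator — one cannot naively diagonalize a $\sigma$-semilinear map over $\Qp$, so the cleanest route is to pass through the Dieudonné–Manin decomposition (over $\overline{\Qp}$ or over $W(\overline{\mathbb F_p})[1/p]$) and then descend, rather than trying to manipulate a characteristic polynomial directly. One must be slightly careful about what ``the characteristic polynomial of $\bfF$'' means: after choosing a $W$-basis of a lattice, $\bfF$ is given by a matrix $A$ composed with $\sigma$, and the relevant invariant is the Newton polygon of (a suitable power of) $A$, equivalently the Dieudonné–Manin slopes; but since the statement only asserts the existence of the decomposition with the stated valuation conditions on eigenvalues, I would phrase the proof in terms of the slope decomposition of isocrystals, remarking that this is the precise meaning of ``eigenvalues of $\bfF$.'' The rest — that the pieces are $\Dieudonne$-modules, finitely generated, and torsion-free — is then essentially formal.
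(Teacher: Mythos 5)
Your argument is correct, but it takes a heavier route than the paper does. The paper works entirely with the characteristic polynomial: it first notes (as you do) that $\bfF\bfV=p$ and the $\bfF$- and $\bfV$-stability of $M$ force all eigenvalue valuations into $[0,1]$, then factors $f(x)=\det\left(x\cdot 1-\bfF|M\right)\in\Zp[x]$ as $f=f_1f_2$ with the roots of $f_1$ of valuation $1$ and those of $f_2$ of valuation $<1$ (both monic in $\Zp[x]$), and simply sets $M^{ord}=f_2(\bfF)M$ and $M^{non-ord}=f_1(\bfF)M$; these images are automatically finitely generated, $\bfF$-stable, and $\bfV$-stable because $\bfV$ commutes with any polynomial in $\bfF$, and the rational splitting is the usual kernel decomposition for the coprime factors $f_1,f_2$. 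Your main worry --- that $\bfF$ is only $\sigma$-semilinear, so one should pass through the Dieudonn\'e--Manin slope decomposition over $\overline{\Qp}$ and then descend --- is moot in this paper's setting: since $K'$ is totally ramified over $\Qp$, the residue field is $\Z/p\Z$, so $W=\Zp$, $\sigma$ is trivial, and $\bfF$ is honestly $\Zp$-linear (the paper even remarks elsewhere that this is exactly why minimal/characteristic polynomials over $\Qp$ make sense for it). Your construction of the integral pieces also differs: you take lattice intersections $M\cap V^{=1}$ and $M\cap V^{<1}$, while the paper takes images $f_2(\bfF)M$ and $f_1(\bfF)M$; both choices work (and the paper then rescales so that $M\subset M^{ord}\times M^{non-ord}$), but the image construction makes finite generation and $\Dieudonne$-stability visible with no appeal to Noetherianity or to lattice arguments. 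In short, your proof buys generality (it would survive a genuinely semilinear $\bfF$ over a larger $W(k)$), whereas the paper's proof is shorter and stays at the level of elementary linear algebra over $\Zp$, which suffices here.
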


\begin{proof}
Since $\bfF\bfV=p$, and $M$ is invariant under both $\bfF$ and $\bfV$, the valuation of an eigenvalue of $\bfF$ cannot be greater than $1$.

We can find $M^{ord}, M^{non-ord}$ as follows: Let $f(x)=(x\cdot 1-\bfF|M) \in \Zp[x]$. Factorize $f(x)=f_1(x)f_2(x)$ so that all the roots of $f_1$ have valuation $1$, and all the roots of $f_2$ have valuation less than $1$.  It is clear that we can choose $f_1$ and $f_2$ such that they are monic and $f_1,f_2 \in \Zp[x]$.

Then, we can choose modules $M^{ord} =f_2(\bfF) M$, and $M^{non-ord} = f_1(\bfF)M$ which are clearly invariant under $\bfF$.

Since $\bfF$ and $\bfV$ are commutative (in fact, $\bfF\bfV=\bfV\bfF=p$), both $M^{ord}$ and $M^{non-ord}$ are invariant under $\bfV$ as well. 
\end{proof}

By scaling if necessary, we can assume $M \subset M^{ord} \times M^{non-ord}$.

\begin{notation}

Let $d$ be the dimension of $G$, and choose $l_1,\ldots, l_d \in L$ so that $L$ is generated by them over $\OO_{K'}$. We choose them in such a way that we can write

\begin{eqnarray*} 
l_1&=& m_1'+m_1''		\\
l_2&=& m_2'+m_2''		\\
\vdots\\
l_{d_1}&=&	m_{d_1}'+m_{d_1}''		
\end{eqnarray*}
\begin{eqnarray*} 
l_{d_1+1}&=&		0+m_{d_1+1}''		\\
\vdots\\
l_d&=&		0+m_d''
\end{eqnarray*}
for $m_1',\ldots,m_{d_1}' \in M^{ord}_{\OO_{K'}}$ and $m_1'',\ldots, m_d'' \in M^{non-ord}_{\OO_{K'}}$.

We also let $d_2=d-d_1$.

\end{notation}

\begin{definition}
Let $L^{ord}=\OO_{K'}(m'_1,\ldots, m'_{d_1}) \subset M^{ord}_{\OO_{K'}}$.
Recall the maximal ideal $\mm'$ of $\OO_{K'}$. We define $G^{ord}$ as follows: For an $\OO_{K'}$-algebra $R$, $G^{ord}(R)$ is the subgroup of $G_{M,L}(R)$ given by the pull-back of the fiber product

\[
\begin{array}{ccc}
\Hom(M^{ord} , CW_k (R/ \mm' R)) & \longrightarrow&	\Hom(L^{ord}, (R\otimes \Qp)/P'(R))	\\
&& \uparrow\\
&&\Hom(L^{ord}, R\otimes \Qp)
\end{array}.
\]
 
Here, the pull-back is given by $M \to M^{ord}\times M^{non-ord} \to M^{ord}$ and $L \to L^{ord}$ (the latter being given by $l_1\mapsto m_1', l_2\mapsto m_2', \ldots, l_{d_1}\mapsto m_{d_1}', l_{d_1+1}\mapsto 0, \ldots, l_d\mapsto 0$).

\end{definition}

\end{subsection}

%%%%%%%%%%%%%%%%%%%%%%%%%%%%%%%

\begin{subsection}{}			\Label{Non-ordinary part}

In this section, we let $K$ be an unramified (finite) extension of $\Qp$, and $k$ be the field of residues of $\OO_K$.

 Let $P(x)=b_dx^d+b_{d-1}x^{d-1}+\cdots+b_0 \in \Zp[x]$, and suppose $b_d$ is a unit, and all the zeros of $P(x)$ have valuation greater than $0$ and less than $1$. (In fact, $b_d$ will be always $1$, but we keep $b_d$ for readability.) Consequently, $b_i/b_d \in p\Zp$ ($i=0,\ldots, d-1$), and $p^ib_i/b_0 \in p\Zp$ ($i=1,\ldots, d$).

Suppose 

$$f(X)=X^p+\alpha_{p-1}X^{p-1}+\cdots + \alpha_1X \in \OO_K[X]$$
satisfies

\[ p|\alpha_i \text{ for }i=1,\ldots,p-1, \text{ and }  v_p(\alpha_1)=1. \]

Let $j(x)=P(x)/b_0-1=\displaystyle \frac{b_d}{b_0}x^d+\frac{b_{d-1}}{b_0}x^{d-1}+\cdots+\frac{b_1}{b_0}x$.

Later in this section, we will define $j(\varphi)$ which imitates the properties of $j(\bfF)$. But first, we make the following formal definition.

\vspace{1.5mm}

\begin{definition}  \Label{Bonn}

\begin{enumerate}
    \item
Recall that $b_d=1$ and $\bfF\bfV=\bfV\bfF=p$. We define
\begin{eqnarray*}
j(\bfF)^{-1}    &=& \left[ \displaystyle \frac{b_d}{b_0} \bfF^d \left( 1+ \frac{b_{d-1}}{b_d} \bfF^{-1}+\cdots + \frac{b_1}{b_d} \bfF^{d-1} \right)  \right]^{-1} \\
&=&  b_0 \displaystyle \frac{\bfV^d}{p^d} \left( 1+ b_{d-1} \frac{\bfV}p + \cdots +b_1 \frac{\bfV^{d-1}}{p^{d-1}} \right)^{-1}
\end{eqnarray*}
where the last line is formally expanded by the Taylor series $(1+x)^{-1}=1-x+x^2- \cdots$.

    \item
    Let $\sigma$ be the $p$-th Frobenius map on $K$ (i.e., $\sigma(x)=x^p \pmod p$ for $x \in \OO_K$).
    Recall that $\bfV x = p x^{\sigma^{-1}}$ for every $x \in K$. For each $n \in \Z$, we define
    
\[ \epsilon^{\sigma^n} = \displaystyle \left( -j(\bfF)^{-1} + j(\bfF)^{-2} - \cdots \right) \cdot \left(-\frac{\alpha_{p-1}^{\sigma^n}}p \right). \]
\end{enumerate}

\end{definition}

\vspace{2mm}
This definition of $\epsilon^{\sigma^n}$ is somewhat similar to $\lambda_n$ in \cite{Kim-Parity}~p.54 and (2.1) in \cite{Kim-Canadian} in the sense that they are all defined by infinite series of similar flavor (although naturally the definition in this paper is much more descriptive and general). However, we believe the definition in this space has much more explanatory power because it can explain the existence of such a constant fully, and also it seems to be defined more naturally.
\vspace{2mm}

\begin{proposition}     \Label{KoKo}
\[ \epsilon^{\sigma^n}+ \displaystyle \frac{b_1}{b_0} \epsilon^{\sigma^{n+1}}+ \cdots + \frac{b_d}{b_0} \epsilon^{\sigma^{n+d}} = \frac{\alpha_{p-1}^{\sigma^n}}p. \]
\end{proposition}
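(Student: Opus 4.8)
The plan is to recast the formal symbols $j(\bfF)$ and $j(\bfF)^{-1}$ as honest, pairwise commuting continuous operators on $K$, and then to deduce the identity from the elementary relation $P(x)/b_0 = 1+j(x)$. Concretely, I would first record how $\bfF$ and $\bfV$ act on scalars. The relation $\bfV x = p\,x^{\sigma^{-1}}$ shows that $\tau := \bfV/p$ acts on $K$ as $x\mapsto \sigma^{-1}(x)$, and then $\bfF\bfV=\bfV\bfF=p$ forces $\bfF$ to act as $\sigma$. Consequently $j(\bfF)=\tfrac1{b_0}\sum_{k=1}^{d}b_k\bfF^k$, the operator $j(\bfF)^{-1}$ of Definition~\ref{Bonn}~(1), and every series built from them all lie in the commutative ring $\Qp[\sigma]$ of operators on $K$; in particular they commute with one another and with every power of $\sigma$. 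The relevant series converge: since each $b_{d-i}\in p\Zp$, the operator $N:=\sum_{i=1}^{d-1}b_{d-i}\tau^i$ is topologically nilpotent, so $(1+N)^{-1}=\sum_{m\ge0}(-N)^m$ exists; writing $T:=j(\bfF)^{-1}=b_0\,\tau^d(1+N)^{-1}$ and using $b_0\in p\Zp$, the operator $T$ is itself topologically nilpotent, so $(1+T)^{-1}$ exists and the series defining $\epsilon^{\sigma^n}$ converges.

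The one computational step is to check that $T$ is a unit in $\Qp[\sigma]$ with $T^{-1}=j(\sigma)=\tfrac1{b_0}\sum_{k=1}^{d}b_k\sigma^k$. Since $\sigma=\tau^{-1}$ and $b_d=1$,
\[
\frac1{b_0}\sum_{k=1}^{d}b_k\sigma^k \;=\; \frac{\tau^{-d}}{b_0}\sum_{k=1}^{d}b_k\tau^{\,d-k} \;=\; \frac{\tau^{-d}}{b_0}\Big(1+\sum_{i=1}^{d-1}b_{d-i}\tau^i\Big) \;=\; \frac{\tau^{-d}}{b_0}\,(1+N),
\]
which is visibly a two-sided inverse of $b_0\,\tau^d(1+N)^{-1}$. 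Summing the defining series of $\epsilon^{\sigma^n}$ in Definition~\ref{Bonn}~(2) then gives
\[
\epsilon^{\sigma^n} \;=\; \Big(\sum_{m\ge1}(-1)^{m+1}T^m\Big)\cdot\frac{\alpha_{p-1}^{\sigma^n}}{p} \;=\; T\,(1+T)^{-1}\cdot\frac{\alpha_{p-1}^{\sigma^n}}{p},
\]
and, because $T$ commutes with $\sigma$ and $\sigma^k(\alpha_{p-1}^{\sigma^n})=\alpha_{p-1}^{\sigma^{n+k}}$, one has $\epsilon^{\sigma^{n+k}}=\sigma^k(\epsilon^{\sigma^n})$ for every $k$.

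Finally I would assemble everything. Evaluating $P(x)/b_0=1+j(x)$ at $x=\sigma$ yields $\sum_{k=0}^{d}\tfrac{b_k}{b_0}\sigma^k=1+j(\sigma)=1+T^{-1}$, so
\[
\sum_{k=0}^{d}\frac{b_k}{b_0}\,\epsilon^{\sigma^{n+k}} \;=\; \Big(\sum_{k=0}^{d}\frac{b_k}{b_0}\sigma^k\Big)\epsilon^{\sigma^n} \;=\; (1+T^{-1})\,T\,(1+T)^{-1}\cdot\frac{\alpha_{p-1}^{\sigma^n}}{p} \;=\; \frac{\alpha_{p-1}^{\sigma^n}}{p},
\]
using $(1+T^{-1})\,T\,(1+T)^{-1}=T^{-1}(1+T)\,T\,(1+T)^{-1}=1$ in the commutative ring $\Qp[\sigma]$. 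This is precisely Proposition~\ref{KoKo}. I expect the only genuinely delicate point to be the first one: pinning down the exact sense in which $j(\bfF)^{-1}$ is an operator on $K$, and verifying that it is compatible with the substitutions $\bfF\leftrightarrow\sigma$ and $\bfV/p\leftrightarrow\sigma^{-1}$, together with the attendant convergence estimates. Once that is set up, the proposition is essentially immediate from $P(x)/b_0=1+j(x)$.
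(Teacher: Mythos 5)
Your proof is correct and takes essentially the same route as the paper: the paper's own argument pulls one factor of $j(\bfF)^{-1}$ out of the defining series to get $(1+j(\bfF))\,\epsilon^{\sigma^n}=\alpha_{p-1}^{\sigma^n}/p$ and then uses $\bfF^i\epsilon^{\sigma^n}=\epsilon^{\sigma^{n+i}}$, which is exactly your identity $(1+T^{-1})\,T\,(1+T)^{-1}=1$ combined with $\epsilon^{\sigma^{n+k}}=\sigma^k(\epsilon^{\sigma^n})$. The only difference is presentational: you spell out the operator interpretation ($\bfF=\sigma$, $\bfV/p=\sigma^{-1}$ on $K$, commutativity since $b_i\in\Zp$) and the convergence estimates, which the paper handles formally.
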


\begin{proof}
We observe

\begin{eqnarray*}
\epsilon^{\sigma^n} &=& j(\bfF)^{-1} \displaystyle \frac{\alpha_{p-1}^{\sigma^n}}p -j(\bfF)^{-1} \left( -j(\bfF)^{-1}+ j(\bfF)^{-2}- \cdots \right) \cdot \left( -\frac{\alpha_{p-1}^{\sigma^n}}p  \right) \\
&=& j(\bfF)^{-1} \displaystyle \frac{\alpha_{p-1}^{\sigma^n}}p -j(\bfF)^{-1} \epsilon^{\sigma^n}     .
\end{eqnarray*}
Thus,

\[ j(\bfF) \epsilon^{\sigma^n}= \displaystyle \frac{\alpha_{p-1}^{\sigma^n}}p - \epsilon^{\sigma^n}, \]
thus

\begin{eqnarray*}
\displaystyle \frac{\alpha_{p-1}^{\sigma^n}}p &=&   \left( 1+j(\bfF) \right) \epsilon^{\sigma^n}      \\
&=&\left( 1+ \displaystyle \frac{b_1}{b_0} \bfF+ \cdots \frac{b_d}{b_0} \bfF^d \right) \epsilon^{\sigma^n}  ,
\end{eqnarray*}
and since $\bfF^i \epsilon^{\sigma^n}= \epsilon^{\sigma^{n+i}}$, our claim follows.
\end{proof}

\begin{definition}
We let $\mathcal P_K$ be the set of power series $\sum_{n=0}^{\infty} A_n x^n$ so that $A_0$ and  $nA_n \in \OO_K$ for $n=1,2,\ldots$.
\end{definition}

\vspace{2mm}

 Recall that $\sigma$ is the $p$-th Frobenius on $K$. Let $\varphi_{f}$ be an operator on $\mathcal P_K$ given by

\[ \varphi_{f} \circ a=\sigma(a) \text{ for }a \in k,\quad \varphi_{f} \circ X=f(X).\]
Recall that $\mathcal P_K / p \OO_K [[x]] \cong \hat{CW}(k[[X]])$. It is easy to see that $\varphi_f$ on $\mathcal P_K / p \OO_K [[x]]$ is equivalent to $\bfF$. Similar to $\log_{\mathcal F_{ss}}(x)$ in \cite{Kobayashi}~p.15 (and also similar to $l(x)$ in \cite{Kim} in its precise form), we define:

\begin{definition}		\Label{Jack and the Beanstalk} We define $l(x)$ by
\[ l(x)=\left[ 1-j(\varphi_f)+j(\varphi_f)^2-\cdots \right] \circ x. \]
\end{definition}

\vspace{2mm}

Readers can see that $\epsilon^{\sigma^n}$ is formally defined as the expansion of $l(x)$ to the minus direction, which may explain its somewhat mysterious properties.

\vspace{2mm}

\begin{proposition}
$l(x)$ is well-defined (i.e., a convergent power series).
\end{proposition}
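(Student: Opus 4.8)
The plan is to establish coefficientwise $p$-adic convergence of the formal series $l(x)=\sum_{k\ge0}(-1)^k\,j(\varphi_f)^k\circ x$, and in addition a lower bound on the valuations of the resulting coefficients which forces convergence of $l(x)$ on the region $v_p(x)>0$. As with $\log_{\mathcal F_{ss}}$ in \cite{Kobayashi} and $l(x)$ in \cite{Kim}, ``convergent'' here means convergent on the open unit disk: writing $l(x)=\sum_{M\ge1}B_M x^M$, the $B_M$ need not tend to $0$, but we will obtain $v_p(B_M)\ge -C\log_p M$ for a constant $C\ge0$ depending only on $d$ and $v_p(b_0)$, and this suffices.

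I would first record two facts about $\varphi_f$ (throughout, $[x^M]$ denotes the coefficient of $x^M$). Since $f(X)\equiv X^p\pmod p$ and $p\mid\alpha_i$, for $g\in\OO_K[[x]]$ vanishing at $0$ we have $\varphi_f(g)=g^\sigma(f(x))\equiv g^\sigma(x^p)\pmod p$; hence $\varphi_f$ never lowers the order in $x$, carries degree $m$ into the range $[m,pm]$, and iterating, $\varphi_f^i$ carries degree $m$ into $[m,p^i m]$ and has lowest (degree-$m$) coefficient of valuation $\ge im$. Secondly, the coefficients of $\varphi_f^i(x^m)$ satisfy a valuation estimate of the form ``linear growth in $i$ with at most a logarithmic loss in the degree'', e.g.\ the coefficient in a low degree $\mu$ has valuation $\ge i-\lfloor\log_p\mu\rfloor$; this is essentially what lies behind the already-recalled facts that $\varphi_f$ preserves $\mathcal P_K$ and reduces to $\bfF$ modulo $p$.

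The core is an induction on $k$ yielding a bound of the shape
\[
v_p\big([x^M]\,j(\varphi_f)^k\circ x\big)\ \ge\ c_1 k - c_2\log_p M ,
\]
valid for all $M\ge1$ and $k\ge0$ (and vacuous once $M>p^{dk}$), with $c_1>0$ and $c_2\ge0$ depending only on $d$ and $v_p(b_0)$. The mechanism is a balance between the single loss coming from $1/b_0$, of valuation $-v_p(b_0)>-d$, and the valuation produced each time one of the operators $\varphi_f^i$ ($1\le i\le d$) is applied. Writing $j(\varphi_f)=\tfrac1{b_0}\sum_{i=1}^d b_i\varphi_f^i$ and using $v_p(b_i)\ge1$ for $i<d$, $v_p(b_d)=0$, and crucially $p^ib_i/b_0\in p\Zp$ --- i.e.\ $v_p(b_i/b_0)\ge1-i$ --- one checks that the part of $b_i\varphi_f^i/b_0$ which does \emph{not} raise the degree already contributes a strict net gain in valuation (this is exactly $v_p(b_i/b_0)\ge1-i$ combined with the valuation-$i$ lowest coefficient of $\varphi_f^i$), while the part which \emph{does} raise the degree draws from $\varphi_f^i$ enough extra valuation to absorb both the fixed loss $v_p(b_0)$ and the logarithmic term. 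Given the displayed bound: for each fixed $M$ the valuations tend to $\infty$ with $k$, so $B_M=\sum_k(-1)^k[x^M]\,j(\varphi_f)^k\circ x$ is a convergent $p$-adic series; and since a nonzero contribution at degree $M$ forces $p^{dk}\ge M$, i.e.\ $k\ge\frac1d\log_p M$, we get $v_p(B_M)\ge -C\log_p M$, so $l(x)=\sum_M B_M x^M$ converges for all $x$ with $v_p(x)>0$. (Equivalently one may characterize $l(x)$ as the unique power series with $(1+j(\varphi_f))\circ l(x)=x$, i.e.\ $P(\varphi_f)\circ l(x)=b_0 x$, and deduce convergence from that functional equation.)

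The crux, I expect, is precisely this inductive inequality: $j(\varphi_f)$ is not a contraction in any naive norm, since one application of $\varphi_f$ can multiply degrees by $p$ while $1/b_0$ costs a fixed $-v_p(b_0)$, so a careless estimate loses valuation geometrically in $k$. The way through is to track degree and valuation simultaneously --- a Newton-polygon-type bookkeeping --- exploiting that the two are tied together: the only way $j(\varphi_f)$ can fail to gain valuation is to keep the degree essentially fixed, and the hypotheses on the $b_i$ are calibrated so that the degree-preserving part of $j(\varphi_f)$ then strictly gains valuation. In practice I would isolate a single lemma bounding $v_p\big([x^M]\,j(\varphi_f)(g)\big)$ in terms of a suitable weight on the coefficients of $g$ --- the weight being chosen so as to be reproduced, with the constant $c_1 k$ incremented, under one application of $j(\varphi_f)$ --- and then iterate.
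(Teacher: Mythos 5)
Your proposal is correct and is essentially the paper's own argument: the paper's proof consists of recalling the divisibility $p^i b_i/b_0 \in p\Zp$ (i.e.\ $v_p(b_i/b_0)\ge 1-i$) and declaring the rest clear, and your coefficientwise estimate balancing this loss against the valuation/degree growth of $\varphi_f^i$ (via $f(X)\equiv X^p \pmod p$ and $v_p(\alpha_1)=1$) is exactly the intended ``rest,'' worked out in detail with the correct conclusion $v_p(B_M)\gtrsim -\log_p M$.
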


\begin{proof} Recall that $p^ib_i/b_0 \in p\Zp$ for $i=1,\ldots, d$. The rest is clear.
\end{proof}

\begin{notation}
\begin{enumerate}[(a)]
\item Let $\pi_0=0$, and $\pi_n$ for $n\geq 1$ be  non-zero so that
$$ f^{\sigma^{-n}}(\pi_n)=\pi_{n-1} \text{ for } n=1,2,\ldots.$$
\item Let $\Tr_{n/m}$ denote $\Tr_{K(\pi_n)/K(\pi_m)}$.

\item Let $f^{(i)}(x)$ denote $f^{\sigma^{i-1}}\circ \cdots \circ f^{\sigma}\circ f(x)$.
\end{enumerate}

\end{notation}

\begin{proposition}				\Label{Can}
For any $i$ with $0\leq i<d$, we have

\begin{multline*}b_0 \Tr_{n/n-d} \left(\epsilon^{\sigma^{-n+i}} + (\varphi_{f^{\sigma^{-n}}}^i \circ l^{\sigma^{-n}}) (\pi_n)\right) \\
+ pb_1 \Tr_{n-1/n-d} \left(\epsilon^{\sigma^{-n+1+i}} + (\varphi_{f^{\sigma^{-n+1}}}^i \circ l^{\sigma^{-n+1}}) (\pi_{n-1})\right)		\\%
+ \cdots p^d b_d \left(\epsilon^{\sigma^{-n+d+i}} + (\varphi_{f^{\sigma^{-n+d}}}^i \circ l^{\sigma^{-n+d}}) (\pi_{n-d})\right)=0.		\quad \quad \quad
\end{multline*}
\end{proposition}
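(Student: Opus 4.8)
The plan is to reduce the statement to the trace-compatibility of the local points together with the algebraic identity of Proposition~\ref{KoKo}. First I would expand each term $\epsilon^{\sigma^{-n+j+i}} + (\varphi_{f^{\sigma^{-n+j}}}^i \circ l^{\sigma^{-n+j}})(\pi_{n-j})$ and observe that, because of Definition~\ref{Jack and the Beanstalk}, the series $l(x)$ satisfies the ``interpolation'' relation $l(x) + j(\varphi_f)\circ l(x) = x$ (equivalently $(1 + j(\varphi_f))\circ l = \mathrm{id}$), which is the convergent analogue of the telescoping used to prove Proposition~\ref{KoKo}. Applying $\varphi_{f}^i$ to this and pairing with the constant terms $\epsilon^{\sigma^n}$, the sum $\epsilon^{\sigma^n} + \varphi_f^i\circ l^{\sigma^{-n}}(\pi_n)$ should behave, under the action of $1 + \tfrac{b_1}{b_0}\bfF + \cdots + \tfrac{b_d}{b_0}\bfF^d$ (i.e.\ $1 + j(\bfF)$), like $\tfrac{\alpha_{p-1}^{\sigma^{\cdot}}}{p}$ plus a ``coboundary'' term coming from the $\varphi_f$-action on $l$.

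The key technical step is to convert the action of $\bfF$ into the composition $\varphi_f$ on power series evaluated at the compatible system $\{\pi_n\}$. Concretely, the defining property $f^{\sigma^{-n}}(\pi_n) = \pi_{n-1}$ should give, for any $g \in \mathcal P_K$, the identity $g^{\sigma^{-n}}(\pi_n) \leftrightarrow (\varphi_f \circ g)^{\sigma^{-n+1}}(\pi_{n-1})$ up to the $p$-scaling built into the relation $\bfV x = p x^{\sigma^{-1}}$ and the trace map. This is precisely why the coefficients in the statement carry the extra powers $p^j b_j$ rather than just $b_j$: the $j$-th term has been ``pushed down'' $j$ levels via $\varphi_f$, and each push-down along $f^{\sigma^{-n+j}}$ on the nonzero root $\pi$ contributes a factor whose trace-normalization is $p$ (this is the standard fact that $\Tr_{K(\pi_n)/K(\pi_{n-1})}$ applied to a $\varphi_f$-shift introduces a factor of $p$, analogous to $\bfF\bfV = p$). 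I would make this precise by induction on the level, carefully tracking the Frobenius twist $\sigma^{-n+j}$ on each factor.

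Once the dictionary between $(\bfF, \bfV)$ acting on $M$ and $(\varphi_f$, trace$)$ acting on $\{l^{\sigma^{-n}}(\pi_n)\}$ is in place, the left-hand side of Proposition~\ref{Can} becomes $b_0 \Tr_{n/n-d}$ applied to $\big(1 + j(\bfF)\big)$ acting on $\epsilon^{\sigma^{-n+i}} + (\varphi_f^i \circ l^{\sigma^{-n}})(\pi_n)$, where the $\varphi_f$-part telescopes to $\varphi_f^{i+1}\circ(\text{something})$ minus $\varphi_f^i\circ(\text{something})$ because $(1 + j(\varphi_f))\circ l = \mathrm{id}$, while by Proposition~\ref{KoKo} the $\epsilon$-part produces exactly $\alpha_{p-1}^{\sigma^{\cdot}}/p$ evaluated appropriately. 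The two contributions should cancel: the constant $\alpha_{p-1}/p$ coming from $\epsilon$ is designed (via Definition~\ref{Bonn}(2)) to cancel the ``constant of integration'' that appears when one tries to realize $\mathrm{id} = (1+j(\varphi_f))\circ l$ at the level of the $\pi_n$'s, where $l(\pi_n)$ is only defined up to such a constant because $f$ has a nonzero linear coefficient $\alpha_1$ but higher terms. That cancellation is what makes the entire expression vanish rather than equal a nonzero constant times $\Tr_{n/n-d}(1)$.

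The main obstacle I expect is the bookkeeping of the Frobenius twists and the $p$-powers: one must check that, after applying the trace maps $\Tr_{n/n-d}$, $\Tr_{n-1/n-d}$, $\ldots$, the various terms $\epsilon^{\sigma^{-n+j+i}}$ and $\varphi_f$-shifts land in the \emph{same} field $K(\pi_{n-d})$ and combine with the correct coefficients $p^j b_j$, and that the ``coboundary'' terms from different levels genuinely telescope rather than merely appearing to. Subtle point: $l(x) \in \mathcal P_K$ only has integrality $n A_n \in \OO_K$, so evaluating at $\pi_n$ and then tracing requires knowing that the ramification of $K(\pi_n)/K$ is large enough to absorb denominators — this is exactly the role of the hypothesis $v_p(\alpha_1) = 1$ and $p \mid \alpha_i$, which forces $\pi_n$ to generate a highly ramified extension (each $f^{\sigma^{-n}}$ is an Eisenstein-type polynomial after dividing by $\alpha_1$). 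I would isolate this as a preliminary lemma (or cite the analogous estimate from \cite{Kim}) before assembling the final identity.
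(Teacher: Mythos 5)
Your overall route is the same as the paper's: use the functional identity $(1+j(\varphi_f))\circ l = x$ coming from Definition~\ref{Jack and the Beanstalk}, the compatibility $f^{\sigma^{-n}}(\pi_n)=\pi_{n-1}$ to turn $\varphi_f$-shifts into evaluation at lower-level $\pi$'s, the field-degree factors $[K(\pi_n):K(\pi_{n-j})]=p^j$ to explain the coefficients $p^jb_j$, and Proposition~\ref{KoKo} to handle the $\epsilon$-terms. However, there is a genuine gap at the one point where the identity actually closes. You say the two contributions ``should cancel'' and attribute the constant on the $l$-side to an ``indeterminacy'' of $l(\pi_n)$ caused by the linear coefficient $\alpha_1$; that is not where it comes from, and indeed there is no such indeterminacy ($l\in\mathcal P_K$ converges at $\pi_n$ since $\pi_n$ has positive valuation, so $l^{\sigma^{-n+i}}(\pi_{n-i})$ is a well-defined element of $K(\pi_{n-i})$). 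In the paper's proof the constant arises from an explicit trace computation: writing $l^{\sigma^{-n+i}}(\pi_{n-i})=\pi_{n-i}-(j(\varphi)\circ l)^{\sigma^{-n+i}}(\pi_{n-i})$ and applying $\Tr_{n/n-d}$, the first term is $\Tr_{n/n-d}\,\pi_{n-i}=-p^{d-1}\alpha_{p-1}^{\sigma^{-n+i}}$, because $\Tr_{K(\pi_m)/K(\pi_{m-1})}\pi_m$ equals minus the $X^{p-1}$-coefficient of $f^{\sigma^{-m}}(X)-\pi_{m-1}$, namely $-\alpha_{p-1}^{\sigma^{-m}}$, and each further level multiplies a constant by $p$. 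It is exactly this $-p^{d-1}\alpha_{p-1}^{\sigma^{-n+i}}$ that the $\epsilon$-part cancels, since the $\epsilon$'s are constants, so $b_0\Tr_{n/n-d}\epsilon^{\sigma^{-n+i}}+pb_1\Tr_{n-1/n-d}\epsilon^{\sigma^{-n+i+1}}+\cdots=p^d\,b_0\bigl(\epsilon^{\sigma^{-n+i}}+\tfrac{b_1}{b_0}\epsilon^{\sigma^{-n+i+1}}+\cdots+\tfrac{b_d}{b_0}\epsilon^{\sigma^{-n+i+d}}\bigr)=p^{d-1}b_0\,\alpha_{p-1}^{\sigma^{-n+i}}$ by Proposition~\ref{KoKo} with $n$ replaced by $-n+i$. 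Without computing $\Tr_{n/n-d}\pi_{n-i}$ your argument cannot produce the matching $\alpha_{p-1}$-term, so the cancellation remains an assertion.

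Two smaller points. First, the ``telescoping to $\varphi_f^{i+1}\circ(\cdot)$ minus $\varphi_f^i\circ(\cdot)$'' is not what happens: the identity $(1+j(\varphi_f))\circ l=x$ collapses the $b_j/b_0$-weighted combination of $l$-values in a single step, and after the observation $(\varphi_{f^{\sigma^{-n+j}}}^i\circ l^{\sigma^{-n+j}})(\pi_{n-j})=l^{\sigma^{-n+i+j}}(\pi_{n-i-j})$ the remaining terms are precisely the $p^jb_j$-weighted traces of $l$ at lower levels appearing in the statement. Second, your worry about denominators and ramification is unnecessary for this proposition: all identities are equalities of elements of $K(\pi_n)$ (or of $K(\pi_{n-d})$ after tracing), and convergence of $l$ at $\pi_n$ already follows from $nA_n\in\OO_K$ and $v(\pi_n)>0$; no Eisenstein-type estimate on the tower is needed here.
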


\begin{proof}
First, we note

\[(\varphi_{f^{\sigma^{-n+j}}}^i \circ l^{\sigma^{-n+j}})(\pi_{n-j})=l^{\sigma^{-n+i+j}}\left(f^{(i),\sigma^{-n+j}}(\pi_{n-j})\right)=l^{\sigma^{-n+i+j}}(\pi_{n-i-j}).\]

 Then, we note

\begin{eqnarray*} &&\Tr_{n/n-d} l^{\sigma^{-n+i}}(\pi_{n-i}) \\
&=& \Tr_{n/n-d}  \left. \left( \left[ 1-j(\varphi)+j(\varphi)^2-\cdots \right] \circ x \right)^{\sigma^{-n+i}} \right|_{x=\pi_{n-i}} \\
&=&\Tr_{n/n-d} \pi_{n-i} - \Tr_{n/n-d} \left. \left( j(\varphi)\circ  \left[ 1-j(\varphi)+j(\varphi)^2- \right]\circ x \right)^{\sigma^{-n+i}} \right|_{x=\pi_{n-i}} \\
&=& -p^{d-1} \alpha_{p-1}^{\sigma^{-n+i}} -\Tr_{n/n-d} \left[ \displaystyle \frac{b_1}{b_0} l^{\sigma^{-n+i+1}} (f^{\sigma^{-n+i}}(x)) + \displaystyle \frac{b_2}{b_0} l^{\sigma^{-n+i+2}} (f^{(2), \sigma^{-n+i}}(x))	\right.	\\
&&	\quad \quad 
+\cdots+\left. \frac{b_{d}}{b_0} l^{\sigma^{-n+i+d}} (f^{(d), \sigma^{-n+i}}(x))\right]_{x=\pi_{n-i}}	\\
&=& -p^{d-1} \alpha_{p-1}^{\sigma^{-n+i}} - \left[p\displaystyle \frac{b_1}{b_0}	\Tr_{n-1/n-d} l^{\sigma^{-n+i+1}} (\pi_{n-i-1}) 		\right.		\\
&&		 \left. + p^2 \frac{b_2}{b_0} \Tr_{n-2/n-d} l^{\sigma^{-n+i+2}} (\pi_{n-i-2})+\cdots+p^d \frac{b_{d}}{b_0} l^{\sigma^{-n+i+d}} (\pi_{n-i-d}) \right].
\end{eqnarray*}

By Proposition~\ref{KoKo} (replacing $n$ with $-n+i$)
our claim follows.
\end{proof}

\vspace{2mm}

Note the role of $\epsilon^{\sigma^n}$ in the argument.

We note that if $b_0,b_1,\ldots, b_d $ are in an unramified field larger than $\Qp$, then the above argument does not work directly because they are not fixed by $\sigma$, and we may need to find an alternative argument, which is one of the reasons that we assume $P(x)=b_dx^d+\cdots+ b_0 \in \Zp[x]$.

\vspace{1.5mm}

\begin{definition}
\Label{Hear}

\begin{enumerate}
\item
Suppose $M$ is a finitely generated $\Dieudonne$-module, and there is a $\Dieudonne$-homomorphism $M \to M(\bfm) \oplus M^{\bf c}(\bfm)$ for some $\Dieudonne$-modules $M(\bfm)$ and $M^{\bf c}(\bfm)$ so that $M(\bfm)=\Dieudonne \cdot \bfm$ for some $\bfm$. 

\item For any $m \in M$, the minimal polynomial of $m$ over $\Qp$ is the monic polynomial $P(x) \in \Qp[x]$ with the smallest degree such that $P(\bfF) m=0$.

\item

We assume the minimal polynomial of $\bfm$ over $\Qp$ is $P(x)$.

\end{enumerate}
\end{definition}

\vspace{1.5mm}

Assuming $\bfF$ is a topological nilpotent on $M(\bfm)$ (i.e., $\bfF^n \to 0$ as action on $M(\bfm)$ as $n\to0$), it is clear that $\Zp[\bfF]\bfm$ is a subgroup of finite index in  $M(\bfm)$.

\vspace{1.5mm}

\begin{remark}
If $\Dieudonne_k=W(k) [\bfF, \bfV]$ for $k \not= \Z/p\Z$ so that $W(k)$ is strictly bigger than $\Zp$, then the above definition of a minimal polynomial may not make sense. For example, suppose that for some $p(x) \in W(k)[x]$, $p(\bfF)\bfm=0$. Then, for some $a \in W(k)$, $p(\bfF) (a\bfm)$ may not be $0$. This is another reason that we assume the group scheme is defined over a totally ramified local field (thus $k=\Z/p\Z$).
\end{remark}
\vspace{1.5mm}

Recall that $K'$ is a totally ramified extension of $\Qp$. (Thus, $K'$ is linearly disjoint over $\Qp$ from $K$.) Recall $L$ is a (free) $\OO_{K'}$-submodule of $M_{\OO_{K'}}$.

\vspace{1.5mm}

\begin{definition}			\Label{St. Petersburg}
Assume $P(x)\in \Zp[X]$, and all its roots have valuation less than 1, and all its coefficients except the leading coefficient are in $p\Zp$.

\begin{enumerate}[(a)]
\item Recall the polynomial $f(x)$. Define $l(x)$ associated to $P(x)$ and $f(x)$ as in Definition~\ref{Jack and the Beanstalk}.

\item Recall $\overline {\mathcal P}_K=\mathcal P_K/ p\OO_K[[X]]$. 

Fix $C\in \Z$ ($C>0$) so that $C$ annihilates $M(\bfm)/\Zp[\bfF]\bfm$. For each $n \geq \Z$, define $\bfx^{\sigma^n} \in \Hom_{\Zp[\bfF]} (M(\bfm) \oplus M^{\bf c}(\bfm), \overline{\mathcal P})$ by

\begin{eqnarray*}	\bfx^{\sigma^n} : M(\bfm) \oplus M^{\bf c}(\bfm) 		&\to 		&    \overline{\mathcal P}		\\
\bfm		&\mapsto		& C \cdot l^{\sigma^n} (x)		\\
M^{\bf c}(\bfm) 	&\mapsto		&0
\end{eqnarray*}
and expand linearly. (Expand $\Zp[\bfF]$-linearly to $\Zp[\bfF]\bfm$, and then to the entire $M(\bfm)$ by scaling, which is possible because $\Zp[\bfF]\bfm$ is a subgroup of $M(\bfm)$ of finite index.)

\item Also, for each $n \in \Z$, define $\tilde \bfx^{\sigma^n}  \in \Hom_{\Zp}(M(\bfm) \oplus M^{\bf c}(\bfm), \mathcal P_K)$ by
\[ \tilde \bfx^{\sigma^n} (\bfF^k \bfm)=C \cdot \left(	\epsilon^{\sigma^{n+k}}+\varphi_{f^{\sigma^n}}^k \circ l^{\sigma^n} (X)		\right), \quad \text{ for } k=0,1,\ldots,d-1\] and 
\[ \tilde \bfx^{\sigma^n}:  M^{\bf c}(\bfm) \mapsto 0,		\]
and extend it linearly.

\end{enumerate}%%%%
\end{definition}%%%%

\vspace{2mm}

Clearly, $\tilde \bfx^{\sigma^n}$ modulo $p\OO_K[[x]]$ is $\bfx^{\sigma^n}$.

Similar to \cite{Kim}~Notation~4.8, we define the following.

\vspace{2mm}

\begin{definition}			\Label{Wizard of Oz}
Choose generators $\bfl_1, \ldots, \bfl_d$ of $L$. Recall that $L\subset M_{\OO_{K'}}$. Via $M \to M(\bfm) \oplus M^{\bf c}(\bfm)$, consider $L$ as a submodule of $(M(\bfm) \oplus M^{\bf c}(\bfm))_{\OO_{K'}}$. For each $h=1,\ldots, d$, write

\[ \bfl_h=(\bfl_{ij}^{(h)})_{(i,j)\in I_0}  \in \varprojlim_{(i,j) \in I_0} \mm^i \otimes M^{(j)}  ,	\]
\[ \bfl_{ij}^{(h)}=\left(\sum_{k=0}^{d-1} \alpha_{k, ij}^{(h)} \bfF^k \bfm , m_2\right) \in (\mm^i \otimes M(\bfm)^{(j)}) \oplus (\mm^i \otimes M^{\bf c} (\bfm)^{(j)}), \]
where $\sum_{k=0}^{d-1} \alpha_{k, ij}^{(h)} \bfF^k \bfm \in \mm^i \otimes M(\bfm)^{(j)}$, and $m_2 \in \mm^i \otimes M^{\bf c} (\bfm)^{(j)}$.

For each $h=1,2,\ldots, d$ and each $(i,j) \in I_0$, we can write

\[	\bfF^j \left( \sum_{k=0}^{d-1} \alpha_{k, ij}^{(h)} \bfF^k \bfm \right)	=	\sum_{k=0}^{d-1} \beta_{k, ij}^{(h)} \bfF^k \bfm\]
for some $\beta_k^{(ij)}\in C^{-1} \mm^i$.
For each $n\in \Z$, we define $\bfy^{\sigma^n} \in \Hom_{\OO_{K'}} (L, K'[[x]])$ by

\begin{eqnarray*}
\bfy^{\sigma^n} (\bfl_h)		&=&	C \cdot 	\left( \sum_{(i,j)\in I_0} \sum_{k=0}^{d-1} \beta_{k, ij}^{(h)} \tilde\bfx^{\sigma^n}  (\bfF^k \bfm)	\right) 	\\%%%
&=&  C \cdot 	\left( \sum_{(i,j)\in I_0} \sum_{k=0}^{d-1} \beta_{k, ij}^{(h)} (\epsilon^{\sigma^{n+k}} +\varphi_{f^{\sigma^n}}^k \circ l^{\sigma^n}(X)) \right)
\end{eqnarray*}
for each $h =1,\ldots, d$.
\end{definition}

\vspace{2mm}

%%%%%%%%%%%%%%%

\begin{definition}			\Label{Battle of Canae}
Recall Fontaine's map $\omega$ and group $P'$ which are defined as follows (\cite{Fontaine}~Chapter~2): $\omega$ is given by

\begin{eqnarray*}
\omega: \hat{CW}(g) &\to& \Qp \otimes g     \\
(\cdots, a_{-n}, \cdots, a_{-1}, a_0)   &\mapsto&   \sum_{n=0}^{\infty} p^{-n} a_{-n}^{p^n}.
\end{eqnarray*}

And, for any $\OO_{K'}$-algebra $g$, $P'(g)$ is an $\OO_{K'}$-submodule of $\Qp \otimes g$ generated by $p^{-n} a^{p^n}$ for all $n \geq 0$ and all $a \in \mm' \cdot g$ (you may also see \cite{Kim}~Definition~3.1 for a reference).

\begin{enumerate}[(a)]
\item Let $G_{M(\bfm) \oplus M^{\bf c} (\bfm), L}(\OO_{K\cdot K'}[[x]])$ be the fiber product given by the following diagram

\begin{eqnarray*}	\Hom_{\Zp[\bfF]}(M(\bfm) \oplus M^{\bf c} (\bfm), \hat{CW}(k[[x]])) 		&\stackrel{\omega}\to&	\Hom_{\OO_{K'}}(L, K\cdot K' [[x]]/P'(\OO_{K\cdot K'}[[x]]))		\\
&&	\uparrow		\\
&& \Hom_{\OO_{K'}}(L, K\cdot K'[[x]]).
\end{eqnarray*}

\item For each $n \in \Z$, let $P^{\sigma^n} =(\bfx^{\sigma^n} , \bfy^{\sigma^n} ) \in G_{M(\bfm) \oplus M^{\bf c} (\bfm), L}(\OO_{K\cdot K'} [[x]])$. It is clear that $P^{\sigma^n}$ is well-defined.

\item Note that there is a natural pull-back $\iota: \Hom_{\Zp[\bfF]}(M(\bfm) \oplus M^{\bf c} (\bfm), \hat{CW}(k[[x]])) \to \Hom_{\Zp[\bfF]}(M, \hat{CW}(k[[x]]))$. Also, by Perrin-Riou's lemma (see the discussion \cite{Perrin-Riou}~Section~3.1 right before Theorem~3.1), we have
\[ \Hom_{\Zp[\bfF]}(M, \hat{CW}( k[[x]])) \cong \Hom_{\Dieudonne}(M, \hat{CW}( k[[x]])). \]

So, via the pull-back $\iota$, there is a map 
\[G_{M(\bfm) \oplus M^{\bf c} (\bfm), L} (\OO_{K \cdot K'}[[x]]) \to G_{M, L} (\OO_{K\cdot K'}[[x]]).\]

\item By abuse of notation, let $P^{\sigma^n}$ denote the image of $P^{\sigma^n}$ in $G_{M, L} (\OO_{K\cdot K'}[[x]])$ under this pull-back map.

\item Where $Q \in G_{M, L}(\OO_{K\cdot K'}[[x]])$ and $\pi$ is an element in some local field with positive valuation, let $Q(\pi) \in G_{M, L}( \OO_{K\cdot K'}[\pi])$ denote $Q$ with $\pi$ substituted for $x$.

Then, clealry $P^{\sigma^n}(\pi_n) \in G_{M, L}(\OO_{K\cdot K'}[\pi_n])$.
\end{enumerate}

\end{definition}

\vspace{2mm}

Then, finally we obtain the following. (See \cite{Kim}~Proposition~4.10 for comparison.)

\vspace{2mm}

\begin{proposition}			\Label{Mexico}

Recall $P(x)=b_dx^d+\cdots+b_0$ to which the construction of $P^{\sigma^n}$ is associated.

For each $n\geq d$, we have that modulo the torsions of $G_{M,L}(\OO_{K\cdot K'}[\pi_n])$,
\[ \sum_{s=0}^d p^s b_s \Tr_{n-s/n-d} P^{\sigma^{-n+s}}(\pi_{n-s})=0.\]

%%%%%%%%%%%%%%%

\end{proposition}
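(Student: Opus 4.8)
The plan is to reduce the claimed norm relation for $P^{\sigma^n}$ to the norm relation already established for the coordinates at the level of the Dieudonné module and the logarithm. Recall that $P^{\sigma^n} = (\bfx^{\sigma^n}, \bfy^{\sigma^n})$ is a point of the fiber product $G_{M(\bfm)\oplus M^{\bf c}(\bfm), L}$, pulled back to $G_{M,L}$; since both the trace maps $\Tr_{n-s/n-d}$ and the projection to $G_{M,L}$ are group homomorphisms that commute with substitution $x \mapsto \pi_{n-s}$, it suffices to verify the identity separately in the two components of the fiber product, i.e.\ for the $\hat{CW}(k[[x]])$-valued component coming from $\bfx^{\sigma^n}$ (equivalently $\tilde\bfx^{\sigma^n}$ over $\OO_K$, mod $p\OO_K[[x]]$) and for the $K\cdot K'[[x]]$-valued component coming from $\bfy^{\sigma^n}$. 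Because $\bfy^{\sigma^n}$ is defined by the explicit formula $\bfy^{\sigma^n}(\bfl_h) = C\sum_{(i,j)}\sum_k \beta_{k,ij}^{(h)}\tilde\bfx^{\sigma^n}(\bfF^k\bfm)$, and the $\beta$'s are $\sigma$-independent (they live in $C^{-1}\mm^i \subset \OO_{K'}$, which is fixed by $\sigma$), the $\bfy$-component identity follows formally once the identity $\sum_s p^s b_s \Tr_{n-s/n-d}\tilde\bfx^{\sigma^{-n+s}}(\bfF^k\bfm)(\pi_{n-s}) = 0$ is known for each $k = 0,\ldots,d-1$.

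So the heart of the matter is to prove, for each fixed $k$ with $0 \le k < d$,
\[
\sum_{s=0}^d p^s b_s \,\Tr_{n-s/n-d}\Big(\tilde\bfx^{\sigma^{-n+s}}(\bfF^k\bfm)(\pi_{n-s})\Big) = 0
\]
in $\mathcal P_K$ (and hence mod $p\OO_K[[x]]$ for the $CW$-component). But by definition $\tilde\bfx^{\sigma^{-n+s}}(\bfF^k\bfm) = C\big(\epsilon^{\sigma^{-n+s+k}} + \varphi_{f^{\sigma^{-n+s}}}^k \circ l^{\sigma^{-n+s}}(X)\big)$, so after dividing by $C$ this is exactly the statement of Proposition~\ref{Can} with $i = k$: the $s$-th term there is $p^s b_s \Tr_{n-s/n-d}\big(\epsilon^{\sigma^{-n+s+k}} + (\varphi_{f^{\sigma^{-n+s}}}^k\circ l^{\sigma^{-n+s}})(\pi_{n-s})\big)$, and Proposition~\ref{Can} asserts the sum vanishes. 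Thus the identity for $\tilde\bfx$ is immediate from Proposition~\ref{Can}, Proposition~\ref{Can} itself having been proved using the constant $\epsilon^{\sigma^n}$ together with the trace computation $\Tr_{n/n-1}\pi_n = -p^{?}\alpha_{p-1}^{\sigma^{?}}$ and Proposition~\ref{KoKo}.

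There are two points requiring care, and these are where I expect the real work to be. First, one must check that passing from $\tilde\bfx^{\sigma^n}$ (defined only on the cyclic part $M(\bfm)\oplus M^{\bf c}(\bfm)$, and only $\Zp$-linearly via the finite-index subgroup $\Zp[\bfF]\bfm$, with the constant $C$ absorbing denominators) through the pull-back $\iota$ to $G_{M,L}$ genuinely respects the $\Dieudonne$-module structure — this uses Perrin-Riou's lemma identifying $\Hom_{\Zp[\bfF]}(M,\hat{CW})$ with $\Hom_{\Dieudonne}(M,\hat{CW})$ — and that the trace of a torsion element of $G_{M,L}(\OO_{K\cdot K'}[\pi_{n-s}])$ is absorbed into the torsion of $G_{M,L}(\OO_{K\cdot K'}[\pi_{n-d}])$, which is why the statement is only ``modulo torsion''. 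Second, one must confirm the compatibility $\bfy^{\sigma^n}$ satisfies the fiber-product defining condition, i.e.\ that $\omega\circ\bfx^{\sigma^n}$ agrees with $\bfy^{\sigma^n}$ modulo $P'(\OO_{K\cdot K'}[[x]])$, so that $P^{\sigma^n}$ is a legitimate point and the trace identity makes sense coordinatewise; this was asserted when $P^{\sigma^n}$ was introduced (Definition~\ref{Battle of Canae}(b)), so I would simply invoke it. The main obstacle, then, is bookkeeping: tracking the powers of $p$ and the Frobenius twists $\sigma^{-n+s}$ so that the indices in Proposition~\ref{Can} line up exactly with the summation $\sum_{s=0}^d p^s b_s \Tr_{n-s/n-d}$ in the statement, and verifying the torsion-absorption claim carefully rather than hand-waving it.
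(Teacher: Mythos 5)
Your proposal is correct and follows essentially the same route as the paper: the paper's proof simply observes that modulo torsion only the $\bfy$-component needs checking, and that component reduces, via the explicit formula $\bfy^{\sigma^n}(\bfl_h)=C\sum_{(i,j)}\sum_k \beta_{k,ij}^{(h)}\tilde\bfx^{\sigma^n}(\bfF^k\bfm)$, to exactly the identity of Proposition~\ref{Can}, which is what you do. Your extra remarks on the pull-back $\iota$ and the fiber-product compatibility are harmless elaborations of points the paper treats as already settled in Definition~\ref{Battle of Canae}.
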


\begin{proof}
 Since the equality is modulo torsions, we only need to check the ``$\bfy$'' part. Then, the claim follows from Proposition~\ref{Can}.

\end{proof}

\end{subsection}

%%%%%%%%%%%%%%%%%%%%%%%%%%%%%%%%%%
%%end of section
%%%%%%%%%%%%%%%%%%%%%%%%%%%%%%%%%%

\begin{subsection}{}			\Label{PPP}
Recall that $K$ is a (finite) unramified extension of $\Qp$, and $K'$ is a totally ramified extension of $\Qp$. By the method in the previous section (Section~\ref{Non-ordinary part}), we can construct a series of local points satisfying a certain norm relation, and doing so does not require any condition on $K'$ other than it being totally ramified over $\Qp$.

However, we need to generate a sufficient number of series of local points, and we need to do so for a group scheme with a mix of ordinary reduction types and non-ordinary reduction types. In this subsection, we explain how to generate enough points, and to do so, we need to assume $K'$ is generated by $p^N$-torsions of a certain Lubin-Tate group as below.

\begin{definition} 		\Label{Gal}

Fix a uniformizer $\rho$ of $K$.

Choose $\zeta \in (\OO_K^{\times})_{tor}$ so that $\Zp[\zeta]=\OO_K$. For each $i=0,1,\ldots, [K:\Qp]-1$, we choose
$$ f_i(x)= x^p+\alpha_{i, p-1}x^{p-1}+\cdots+ \alpha_{i, 1}x$$
so that $\alpha_{i, 1},\ldots, \alpha_{i, p-1} \in p\OO_K$, and
$$ \alpha_{i, p-1}=\zeta^i p,\quad \alpha_{i, 1}=\rho. $$

For each $i=0,1,\ldots, [K:\Qp]-1$, we let $\pi_{i, 0}=0$, and choose non-zero $\pi_{i, n}$ for each $n\geq 1$ so that
\[ f^{\sigma^{-n}}_i(\pi_{i, n})=\pi_{i, n-1}. \]

By the local class field theory, $K(\pi_{i,n})$ does not depend on $i$. We let $K(\pi_n)$ denote any $K(\pi_{i,n})$, and $K(\pi_{\infty})$ denote $\cup_n K(\pi_n)$.

\end{definition}

\vspace{2mm}

\begin{assumption}

We suppose $K'\cdot K=K(\pi_N)$ for some $N$.

\end{assumption}

For example, we can consider the case $K'=\Qp(\pi_{p^N})$ for some $N$, and $K$ is the unique unramified quadratic extension of $\Qp$. One thing to note is that contrary to its appearance, $K'$ is not an extension of $K$ unless $K=\Qp$ because $K'$ is totally ramified over $\Qp$, and $K$ is unramified over $\Qp$. 
%In fact, our notation is unfortunate because many readers may think that $K'$ is an extension of $K$.

\vspace{2mm}

Recall that \textbf{minimal polynomial} of $\bfm \in M$ is the (non-zero) monic polynomial $p(x) \in \Qp[x]$ of minimal degree so that $p(\bfF) \bfm=0$.

%%%%%%%%%%%%%%%%%%%%%%%%%%%%%%%%%%%%%

\begin{notation} Recall that $G$ is a (smooth) formal group scheme over $\OO_{K'}$ of dimension $d$, and $M$ is the associated Dieudonne module. From Section~\ref{Ordinary part}, recall the definitions of $M^{ord}$ and $M^{non-ord}$. As in that section, let $d_1$ and $d_2$ be the minimum numbers of $\Qp[\bfF]$-generators of $M^{ord} \otimes \Qp$ and $M^{non-ord} \otimes \Qp$, respectively. (Then, $d=d_1+d_2$.)

\begin{enumerate}[(a)]
\item Choose generators $\tilde\bfm_1,\ldots, \tilde \bfm_{d_2}$ of $M^{non-ord}$, and by multiplying some polynomials (with coefficients in $\Qp$) of $\bfF$ to them if necessary, obtain $\bfm_1, \ldots, \bfm_{d_2}$ so that their minimal polynomials are irreducible over $\Qp$.

\item
Let $p_i(x)\in \Zp[x]$ $(i=1,\ldots, D)$ be the minimal polynomial of $\bfm_i$. We write

\[ p_i(x)=b^{(i)}_{d_i} x^{d_i}+b_{d_i-1}^{(i)}x^{d_i-1}+\cdots + b_0^{(i)}. \]
(In fact, $b_{d_i}^{(i)}=1$ by definition, but for simplicity, we keep $b^{(i)}_{d_i}$.) By the definition of $M^{non-ord}$, $b_0^{(i)},\ldots, b_{d_i-1}^{(i)} \in p\Zp$.

\end{enumerate}
\end{notation}

\vspace{2mm}

By the definition of $M^{non-ord}$, it is clear that all the roots of $p_i(x)$ have valuation less than $1$.

%%%%%%%%%%%%%%%%%%%%%%%%%%%%%%%%%%%%%

\begin{definition}		\Label{Leipzig}
For $i=0,1,\ldots, [K:\Qp]-1$ and $j=1,2,\ldots, d$, we define the following: For each $i$, and every $n\in \Z$, we define $\varphi_{f_i^{\sigma^n}}$ acting on $\mathcal P_K$ by

$$\varphi_{f_i^{\sigma^n}} \circ a=\sigma(a) \text{ ~~for } a \in k, \text{ and } \varphi_{f_i^{\sigma^n}} \circ X=f_i^{\sigma^n}(X).$$

For each $j=1,\ldots, d_2$, write $J_j(x)=p_j(x)/b^{(j)}_{0}-1$. As in Definition~\ref{Jack and the Beanstalk}, we define

\[ l_{f_i,p_j}(x)=\left[ 1-J_j(\varphi_{f_i})+J_j(\varphi_{f_i})^2 - \cdots \right] \circ x. \]

Also, for each $k \in \Z$ we define $\epsilon^{\sigma^k}_{f_i, p_j}$ attached to $f_i, p_j$ by the method of Definition~\ref{Bonn}.

\vspace{1.5mm}

\begin{enumerate}[(a)]
\item
Let $M(\bfm_j)=\Dieudonne \cdot \bfm_j$, and $M^{\bf c}(\bfm_j)=\prod_{i\not=j} \Dieudonne \cdot \bfm_i \times M^{ord}$. (See Notation~\ref{Hear}.) Then, there is a canonically defined $\Dieudonne$-homomorphism $M \to M(\bfm_j) \oplus M^{\bf c} (\bfm_j)$.

\vspace{1.5mm}

\item
As in Definition~\ref{St. Petersburg}, fix a sufficiently large integer $c>0$ so that $c$ annihilates $M(\bfm_j)/\Zp[\bfF]\cdot \bfm_j$ for every $j$, and for each $n \in \Z$, define $\bfx_{f_i,p_j}^{\sigma^n} \in \Hom_{\Zp[\bfF]} (M, \overline{\mathcal P})$ by

\begin{eqnarray*}	\bfx_{f_i,p_j}^{\sigma^n} : M 		&\to 		&    \overline{\mathcal P}		\\
\bfm_j	&\mapsto		&c \cdot l_{f_i, p_j}^{\sigma^n} (x)		\\
m (\in M^{\bf c}(\bfm_j)) 	&\mapsto		&0
\end{eqnarray*}
and extend it $\Zp[\bfF]$-linearly (which is possible because $c$ annihilates $M(\bfm_j)/\Zp[\bfF] \cdot \bfm_j$).

Also, for each $n \in \Z$, define $\tilde \bfx_{f_i,p_j}^{\sigma^n}  \in \Hom_{\Zp}(M, \mathcal P_K)$:

\[ \tilde \bfx_{f_i,p_j}^{\sigma^n} (\bfF^k \bfm_j)= c \cdot \left( \epsilon^{\sigma^k}_{f_i, p_j} +\varphi_{f_i^{\sigma^n}}^k \circ l_{f_i, p_j}^{\sigma^n} (X)	\right), \text{ for } k=0,1,\ldots,d_j-1,\] and 
\[ \tilde \bfx_{f_i,p_j}^{\sigma^n} (m)=0 \text{ for } m \in M^{\bf c}(\bfm_j).\]

Then, define $\bfy_{f_i, p_j}^{\sigma^n}$ as in Definition~\ref{Wizard of Oz} using $\tilde \bfx_{f_i,p_j}^{\sigma^n}$.

\vspace{1.5mm}

\item Define $P_{f_i,p_j}^{\sigma^n} = ( \bfx^{\sigma^n}_{f_i, p_j}, \bfy^{\sigma^n}_{f_i, p_j}) \in G_{L, M(\bfm_j) \times M^{\bf c}(\bfm_j)} (\OO_{K\cdot K'} [[x]])$.
(See Definition~\ref{Battle of Canae} for the definition of $ G_{L, M(\bfm_j) \times M^{\bf c}(\bfm_j)} (\OO_{K\cdot K'} [[x]])$.)
\end{enumerate}
\end{definition}

\vspace{2mm}

Via $M \to M(\bfm_j) \oplus M^{\bf c} (\bfm_j)$, there is a natural homomorphism $G_{L,  M(\bfm_j) \oplus M^{\bf c}(\bfm_j)} \to G_{L, M}$. By combining with the isogeny $G_{L, M} \to G$, we obtain a homomorphism $\varrho_j: G_{L,  M(\bfm_j) \oplus M^{\bf c}(\bfm_j)} \to G$.

\vspace{2mm}

\begin{definition}			\Label{Vio}
For each $f_i$ for $i=0,1,\ldots, [K:\Qp]-1$, $p_j$ 
for $j=1,\ldots, d_2$, and $n \in \Z$, we let
\[ Q_{f_i,p_j}^{non-ord, \sigma^{-(N+n)}}(\pi_{i,N+n})=\varrho_j (P_{f_i, p_j}^{\sigma^{-(N+n)}}(\pi_{i, N+n})) \in G(\OO_K [\pi_{N+n}]). \]
\end{definition}

\vspace{2mm}

Clearly, $Q_{f_i,p_j}^{non-ord, \sigma^{-(N+n)}}(\pi_{i, N+n})$ should satisfy the relation in Proposition~\ref{Can}:

\begin{multline*} b_0^{(i)} \Tr_{N+n/N+n-d_i} Q_{f_i,p_j}^{non-ord, \sigma^{-(N+n)}}(\pi_{i, N+n})		\\
+p b_1^{(i)} \Tr_{N+n-1/N+n-d_i} Q_{f_i,p_j}^{non-ord, \sigma^{-(N+n-1)}}(\pi_{i, N+n-1})		\\
+ \cdots +p^{d_i} b_{d_i}^{(i)} Q_{f_i,p_j}^{non-ord, \sigma^{-(N+n-d_i)}}(\pi_{i, N+n-d_i})=0
\end{multline*}
modulo torsions.

On the other hand, since $G^{ord}$ is of multiplicative type with dimension $d_1$, $\varprojlim_n G^{ord}(\OO_K[\pi_{N+n}])$ has rank $d_1 \cdot [K\cdot K':\Qp]$. In other words, we have the following:
\vspace{2mm}

\begin{proposition}			\Label{Berlin}
We can choose a set of elements $\{(Q_{i, N+n}^{ord})_n\}_{i=1,2,\ldots, d_1 \cdot [K\cdot K':\Qp]}$ of $\varprojlim_n G^{ord}(\OO_{K}[\pi_{N+n}])$ so that it generates a free $\Zp[[\Gal(K(\pi_{\infty})/K(\pi_N))]]$-submodule of rank $d_1\cdot [K\cdot K':\Qp]$. In particular, we can choose them so that $\{Q_{i, N+n}^{ord} \}_{i=1,2,\ldots, d_1 \cdot [K\cdot K':\Qp]}$ generates a subgroup of $G^{ord}(\OO_{K}[\pi_{N+n}])$ whose quotient has a bounded rank as $n$ varies.

\end{proposition}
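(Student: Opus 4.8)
\textbf{Proof strategy for Proposition~\ref{Berlin}.}

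The plan is to exploit the fact that $G^{ord}$ is a formal group of multiplicative type of dimension $d_1$, so that the standard ordinary Iwasawa-theoretic machinery (in the manner of \cite{Mazur}, and as used in \cite{Kim}) applies to the tower $K(\pi_N) \subset K(\pi_{N+1}) \subset \cdots$. First I would recall that since $G^{ord}$ is of multiplicative type, it becomes, after a finite base change, isomorphic to a product of $d_1$ copies of $\widehat{\mathbb G}_m$, or more precisely its Dieudonn\'e module $M^{ord}$ has all Frobenius eigenvalues of valuation $1$, which forces $\bfV$ to be topologically nilpotent. Hence the universal norm module $\varprojlim_n G^{ord}(\OO_K[\pi_{N+n}])$ (inverse limit with respect to the trace/norm maps $\Tr_{N+n+1/N+n}$) is nonzero and, by the theory of the Coleman/Kummer map for Lubin-Tate extensions, is a finitely generated torsion-free module over the Iwasawa algebra $\Lambda := \Zp[[\Gal(K(\pi_\infty)/K(\pi_N))]]$. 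The key computation is its $\Lambda$-rank: since $[K(\pi_{N+n}):\Qp] = [K\cdot K':\Qp]\cdot p^{n}$ up to a bounded factor, and $G^{ord}(\OO_K[\pi_{N+n}])$ has $\Zp$-corank essentially $d_1 \cdot [K(\pi_{N+n}):\Qp]$ modulo bounded error, a rank count over $\Lambda$ (whose relevant quotients $\Lambda/\omega_n\Lambda$ have $\Zp$-rank $\sim p^n$) yields that the universal norm module has $\Lambda$-rank exactly $d_1 \cdot [K\cdot K':\Qp]$.

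Given this, I would choose $\{(Q^{ord}_{i,N+n})_n\}_{i=1}^{d_1\cdot[K\cdot K':\Qp]}$ to be a $\Lambda$-basis of a free rank-$d_1\cdot[K\cdot K':\Qp]$ submodule of finite index inside the universal norm module (such a free submodule exists because the module is torsion-free of that rank over the regular ring $\Lambda$ — indeed, clearing the finitely many relations against a suitable height-one ideal, or invoking that torsion-free finitely generated modules over $\Lambda$ contain a free submodule of maximal rank with finite-index pseudo-complement). It remains to check the ``in particular'' clause: descending to finite level, $\{Q^{ord}_{i,N+n}\}_i$ generates a $\Zp$-submodule of $G^{ord}(\OO_K[\pi_{N+n}])$, and since the full module has $\Zp$-rank $d_1\cdot[K(\pi_{N+n}):\Qp]+O(1) = d_1\cdot[K\cdot K':\Qp]\cdot p^n + O(1)$ while the submodule generated by a $\Lambda$-basis of the universal norms has $\Zp$-rank at least $d_1\cdot[K\cdot K':\Qp]\cdot(\text{rank of }\Lambda/\omega_n\Lambda) = d_1\cdot[K\cdot K':\Qp]\cdot p^n + O(1)$, the cokernel (quotient) has bounded $\Zp$-rank as $n$ varies. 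The compatibility of the $Q^{ord}_{i,N+n}$ under trace maps is automatic since they come from a norm-coherent system.

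The main obstacle I anticipate is establishing the precise $\Lambda$-rank of the universal norm module and the uniform ($O(1)$) control of the error terms. This requires, concretely, a Coleman-type exact sequence for the multiplicative-type formal group $G^{ord}$ over the Lubin-Tate tower attached to the $f_i$'s — i.e., identifying $\varprojlim_n G^{ord}(\OO_K[\pi_{N+n}]) \otimes \Qp$ with (a twist of) $\Lambda \otimes$(cotangent space of $G^{ord}$) — together with a Nakayama/control argument showing the rank drop at each finite layer is bounded independently of $n$. In the dimension-one case this is exactly what underlies \cite{Kim} (building on \cite{Mazur} and \cite{Perrin-Riou}), and the higher-dimensional multiplicative-type case follows by the same argument applied coordinate-wise after passing to $M^{ord}\otimes\Qp$; the one subtlety is that $M^{ord}$ need not split as a product of rank-one pieces over $\Dieudonne$, but since we only need the statement up to isogeny (hence up to $\otimes\Qp$) and up to bounded rank, Proposition~\ref{Optical Fiber} together with the fact that $M^{ord}\otimes\Qp$ is a finite free module lets us reduce to the rank-one building blocks.
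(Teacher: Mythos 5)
Your proposal is correct and follows essentially the same route as the paper, which offers no detailed proof at all: it simply asserts that since $G^{ord}$ is of multiplicative type of dimension $d_1$, the module $\varprojlim_n G^{ord}(\OO_{K}[\pi_{N+n}])$ has rank $d_1\cdot[K\cdot K':\Qp]$, i.e.\ it invokes exactly the standard ordinary (Mazur-type) universal-norm/Coleman-map theory that you spell out. Your write-up just supplies the details (the $\Lambda$-rank count, the choice of a finite-index free submodule, and the bounded-cokernel check at finite layers) that the paper leaves implicit.
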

\end{subsection}

\end{section}%%%

%%%%%%%%%%%%%%%%%%%%%

\begin{section}{Perrin-Riou characteristics}

\Label{Turin}

The construction of local points in the previous section can be applied to the following setting:

\begin{notation}			\Label{300}
To simplify our assumptions, let $F$ be a number field, $F_{\infty}$ be its extension so that $\Gal(F_{\infty}/F) \cong \Zp$, and $H$ be a number field so that every prime of it above $p$ is unramified over $H/\Q$. For any prime $\mfq$ of $H\cdot F_{\infty}$ above $p$, let $\mfq$ also denote $\mfq \cap H$ by abuse of notation, and let $\mfp$ denote both $\mfq \cap F_{\infty}$, and $\mfq \cap F$ again by abuse of notation.
\end{notation}

\begin{assumption}

For every prime $\mfq$ of $HF_{\infty}$ above $p$, $(HF_{\infty})_{\mfq}=H_{\mfq} (\pi_{\infty})$ for the $p$-power torsions $\pi_n$ of some Lubin-Tate group of height 1 defined over $\OO_{H_{\mfq}}$ (equivalently, $(HF_{\infty})_{\mfq}$ is abelian over $\Qp$, and $\Gal((HF_{\infty})_{\mfq}/H_{\mfq}) \cong \Z_p^*$), and $(HF)_{\mfq}=H_{\mfq} (\pi_{N_{\mfq}})$ for some $N_{\mfp}$.
\end{assumption}

There are some obvious cases where this assumption holds true: For example, $F=\Q(\zeta_{p^N})$ for some $N>0$, $F_{\infty}=\Q(\zeta_{p^{\infty}})$, and $H$ is a number field such that every prime of $H$ above $p$ is unramified over $H/\Q$.

\begin{notation}
Let $A$ be an abelian variety over $F$ which has good reduction at every prime of $F$ above $p$. Let $A^{\vee}/F$ be its dual abelian variety.
\end{notation}

\begin{remark}
It is probably difficult to determine whether a given abelian variety has good reduction at every prime or not. We believe that the result of this paper holds regardless of that. However, to the best of our knowledge, there is not a proper theory for abelian varieties which have bad (and non-multiplicative--in other words, unstable) reduction.
\end{remark}

%%%%%%%%%%%%%%%%%5

\begin{notation}			\Label{AFG}

Let

\[ \Gamma=\Gal(H\cdot F_{\infty}/ H\cdot F)\cong \Gal(F_{\infty}/F). \]
\end{notation}

Let $\mathcal A_{\mfp}$ be the Neron model of $A$ over $\OO_{F_{\mfp}}$, and let $G_{\mfp}=\varinjlim_n \mathcal A_{\mfp}[p^n]$ (as the injective limit of finite group schemes). Let $M_{\mfp}$ be its Dieudonne module.

As in Proposition~\ref{Optical Fiber}, $M_{\mfp}\otimes \Qp$ has decomposition $\Big(M_{\mfp}^{ord}\otimes \Qp\Big) \times\Big(M_{\mfp}^{non-ord} \otimes \Qp\Big)$, and we may let $d^{ord}_\mfp$ be the number of generators of $M_{\mfp}^{ord}$, and $d^{non-ord}_\mfp$ be the number of generators of $M_{\mfp}^{non-ord}$.

Then, as in Definitions~\ref{Leipzig} and \ref{Vio}, there are

\[ Q_{f_i, p_j}^{non-ord, \sigma^{-n}}(\pi_{i,N_{\mfq}+n}) \in G(\OO_{H_{\mfq}F_{\mfp}}(\pi_{N_{\mfq}+n})) \]
for some $f_i$'s for $i=0,1,\ldots, [H_{\mfq}:\Qp]-1$ and $p_j$'s for $j=1,\ldots, d^{non-ord}_{\mfp}$, which we will not specify. Fix $\tilde \tau_1, \ldots, \tilde \tau_{[F_{\mfp}:\Qp]} \in \Gal((HF_{\infty})_{\mfq}: H_{\mfq})$ which lift all the elements of $\Gal((HF)_{\mfq}/H_{\mfq})$. Then, $Q_{f_i, p_j}^{non-ord, \sigma^{-n}}(\pi_{i,N_{\mfq}+n})^{\tilde \tau_l}$ for all $i,j,l$ give us $d^{non-ord}_{\mfp} \cdot [(HF)_{\mfq}:\Qp]$ points.

And, by Proposition~\ref{Berlin}, we can choose the following points:

\begin{definition} We choose the points $Q_{i,n}^{\mfq, ord} \in G(\OO_{H_\mfq F_{\mfq}[\pi_{N_{\mfq}+n}]})$ for $i=1,\ldots, d^{ord}_{\mfp}\cdot [(H F)_{\mfq}: \Qp]$ so that 

\[N_{(H F)_{\mfq}(\pi_{N_{\mfq}+n+1})/(H F)_{\mfq}(\pi_{N_{\mfq}+n})} Q_{i,n+1}^{\mfq, ord}=Q_{i,n}^{\mfq, ord}.\]

We choose them so that $\{Q_{i,n}^{\mfq, ord}\}_i$ generates a subgroup of $G(\OO_{H_\mfq}[\pi_{N_{\mfq}+n}])$ whose quotient has a bounded rank as $n$ varies.
\end{definition}
\vspace{1.5mm}

Altogether, we have $d\cdot [(HF)_{\mfq}:\Qp]$ sequences of local points, and through all primes $\mfq$ of $H \cdot F$ lying above $p$, we have $d\cdot [H F:\Q]$ sequences of local points. For convenience, we list them $\{ Q_{i,n}\}_{i=1,\ldots, d\cdot [H F:\Q] }$.

Note that for each $i$, $\{ Q_{i,n}\}_n$ satisfies

\begin{eqnarray}	\Label{Takgu}
 \sum_{s=0}^{e_i} g^{(i)}_s \Tr_{n-e_i+s/n-e_i} Q_{i,n-e_i+s}=0
\end{eqnarray}
modulo torsions for some irreducible $q_i(x)=g^{(i)}_{e_i}x^e+\cdots + g^{(i)}_0 \in \Zp[x]$.

The following is a standard definition:

\begin{definition}[Relaxed Selmer groups] \Label{Relaxed Selmer}

Let $\bfA'=\cup_n A^\vee [p^n]$.
For  an extension $L$ of $F$,
\[ \Selr(\bfA'/L)\stackrel{def}= \ker \left( H^1(L, \bfA')\to \prod_{v} \ds \frac{H^1(L_v, \bfA')}{A'(L)\otimes \Qp/\Zp} \right)\]
where $v$ runs over all primes of $L$ not lying above $p$.
\end{definition}

\vspace{2mm}

Clearly, $\Selr$ satisfies the Control Theorem. (See \cite{Mazur},\cite{Greenberg}. They assume that the abelian variety has good ordinary reduction at every prime above $p$, but their arguments can be easily adapted because $\Selr$ is a relaxed Selmer group--in other words, it has no local condition for primes above $p$.) In other words,

\[ \Selr(\bfA'/H \cdot F_n) \to \Selr(\bfA'/ H \cdot F_{\infty})^{\Gal(H \cdot F_{\infty}/ H \cdot F_n)} \]
has finite and bounded kernel and cokernel as $n$ varies.

Set

\[ \Lambda\stackrel{def}=\Zp[[\Gamma]]\cong \Zp[[X]] \]
by choosing a topological generator $\gamma$ of $\Gamma$, and setting $\gamma=X+1$.

\vspace{2mm}

%%%%%%%%%%%%%%%%%

\begin{assumption}		\Label{Danube}
Let $M^{\vee}$ denote $\Hom(M,\rat/\Z)$ (``the Pontryagin dual''). We assume
\begin{eqnarray*}
\rank_{\Lambda} \Selr(\bfA'/ H \cdot F_{\infty})^{\vee} =		 \dim A \cdot   [H \cdot F :\Q].
\end{eqnarray*}

\end{assumption}

\vspace{2mm}

%%%%%%%%%%%%%%%%%
As we can see easily by standard techniques of Iwasawa theory, this assumption is true if $\Sel(\bfA'/H \cdot F_n)^{\chi}$ is finite for any character $\chi$ of $\Gal(H \cdot F_n/ H \cdot F)$ for any $n$.

\vspace{2mm}

\begin{notation}
\begin{enumerate}
\item For each $n\geq 0$, let
\[ \Gamma_n=\Gamma/\Gamma^{p^n}, \quad \Lambda_n=\Zp[\Gamma_n]. \]
\item For a group $M$ on which $\Gamma$ acts, we let
\[ M_{/\Gamma^{p^n}}=M/\{ (1-a)\cdot m \; |\; a \in \Gamma^{p^n}, m \in M\}. \]
Equivalently, where $\gamma$ is a chosen topological generator of $\Gamma$,
\[ M_{/\Gamma^{p^n}}=M/(1-\gamma^{p^n})\cdot M. \]
\end{enumerate}
\end{notation}

\vspace{1.5mm}

%%%%%%%%%%%%%%%%%%%%

\begin{definition}
\begin{enumerate}
\item
Let $s=\dim A \cdot [H \cdot F : \Q]$.

\item Let
\[ S_{tor}=\left( \Selr(\bfA'/ H \cdot F_{\infty})^{\vee} \right)_{\Lambda-torsion}.\]
\end{enumerate}
\end{definition}

\vspace{2mm}

If we assume Assumption~\ref{Danube}, then there is a short exact sequence

\begin{eqnarray}		\Label{Ice-Cream}	 0  \to \Selr(\bfA'/H \cdot F_{\infty})^{\vee}/S_{tor} \to \Lambda^s \to C \to 0
\end{eqnarray}
for a finite group $C$. This induces

\[ \alpha_n': \Selr(\bfA' /H \cdot F_n)^{\vee} \to \Lambda_n^s .\]
We note that there is a map
\[ \beta_n:  \prod_{\mfq} A((HF_n)_{\mfq}) \to \Selr(\bfA'/H \cdot F_n)^{\vee} \]
given by the local Tate duality.

\begin{definition}
\begin{enumerate}[(a)]
\item Let $R_{i, n} \in \Lambda_n^s$ be the image of $Q_{i,n}$ under $\alpha_n' \circ \beta_n$.

\item Let $\Proj_n^m$ be the natural projection from $\Lambda_m$ to $\Lambda_n$ ($m\geq n$).
\end{enumerate}
\end{definition}

Recall that each $\{ Q_{i,n} \}_n$ satisfies (\ref{Takgu}) for some $q_i(x)$.

In the following, $\Lambda_{\alpha}$ denotes the set of power series $f(T) \in \overline{\C}_p[[T]]$ satisfying $|f(x)| < C |1/\alpha^n|$ for some fixed $C>0$ for every $n \geq 1$ and $x \in \C_p$ with $|x| < |1/\sqrt[p^n]p|$.

\begin{proposition}
For each root $\alpha$ of $q_i(x)$, there is $f_{\alpha, i} \in \Lambda_{\alpha}^s$ so that for some fixed constant $c$,

\[ R_{i, n} \equiv \sum_{\alpha} f_{\alpha, i} \alpha^{n+1} \pmod{c^{-1}((T+1)^{p^n}-1) \Lambda} \]
for every $n$.

\end{proposition}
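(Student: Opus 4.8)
The plan is to derive the asymptotic formula for $R_{i,n}$ from the norm relation $(\ref{Takgu})$ together with a ``solving the recursion'' argument in the Iwasawa algebra, exactly parallel to the corresponding step in \cite{Perrin-Riou} and \cite{Kim}. First I would set up notation: fix $i$, write $q(x) = q_i(x) = g_e x^e + \cdots + g_0$ with $e = e_i$, and let $R_n = R_{i,n} \in \Lambda_n^s$. Applying the maps $\alpha_n' \circ \beta_n$ to the relation $(\ref{Takgu})$ — and using that these maps are $\Gal$-equivariant, so that trace maps $\Tr_{n-e_i+s/n-e_i}$ become the norm elements of the group algebra, i.e. multiplication by $\nu_{n,n-e}$ where $\nu_{n,m} = \sum_{\tau \in \Gal(F_n/F_m)} \tau$ — I obtain a recursion relating $\Proj^n_{n-e}R_n$ (or rather the image of $R_n$ under corestriction) to lower terms. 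The key point, as in \cite{Kim}, is that modulo torsion the local points satisfy $(\ref{Takgu})$, and the bounded-torsion and bounded-cokernel statements for $\Selr$ (the Control Theorem, plus the finiteness of $C$ in $(\ref{Ice-Cream})$) let us absorb all the error terms into a fixed constant $c$; this is the origin of the $c^{-1}$ in the statement.

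Next I would convert the recursion into the power-series statement. Identify $\Lambda = \Zp[[T]]$ via $\gamma = T+1$; then $\nu_{n,m}$ corresponds to $\frac{(T+1)^{p^n}-1}{(T+1)^{p^m}-1}$, and reducing modulo $(T+1)^{p^n}-1$ kills $\Lambda_n$-torsion phenomena in the expected way. The sequence $(R_n)_n$ is ``coherent'' up to bounded error under the projections $\Proj^n_{n-1}$, so — after multiplying by the fixed constant $c$ and passing to $\Lambda_\alpha$-coefficients — one can interpolate. Concretely, one looks for $f_\alpha \in \Lambda_\alpha^s$, indexed by the roots $\alpha$ of $q$, such that $\sum_\alpha f_\alpha \alpha^{n+1}$ matches $R_n$ modulo $c^{-1}((T+1)^{p^n}-1)\Lambda$. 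The existence of such $f_\alpha$ is a linear-algebra fact: the characteristic polynomial of the shift operator $n \mapsto n+1$ acting on solutions of the homogeneous recursion attached to $q(x)$ is $q$ itself, so the general solution is a combination of $\alpha^n$ over roots $\alpha$; the Vandermonde matrix $(\alpha^j)_{\alpha, j}$ is invertible over $\overline{\Q}_p$ because $q$ is \emph{irreducible}, hence separable, so its roots are distinct. The growth condition defining $\Lambda_\alpha$ is exactly what is forced by the $p$-adic size of the interpolation data, i.e. by how $\alpha^{n+1}$ grows against the moduli $((T+1)^{p^n}-1)$; this is the same estimate that appears in Perrin-Riou's construction and in \cite{Kim}, and I would cite it rather than redo it.

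The main obstacle I anticipate is bookkeeping rather than a conceptual gap: one must carefully track that the norm-compatible tower of local points, pushed through $\alpha_n' \circ \beta_n$, really produces the recursion $(\ref{Takgu})$ \emph{with} uniformly bounded error (independent of $n$), and that the boundedness constant can be folded into a single $c$. This requires that each of the ingredients — the Control Theorem for $\Selr$ (bounded kernel/cokernel), the finiteness of $C$ in $(\ref{Ice-Cream})$, the bounded cokernel of $\beta_n$ via local Tate duality, and the bounded-index statements from Propositions~\ref{Berlin} and \ref{Mexico} (i.e. that the $Q_{i,n}$ generate finite-index-up-to-bounded subgroups) — all hold with constants independent of $n$. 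Granting these, the interpolation step is routine. The separability of $q_i$ (irreducibility over $\Zp$) is what makes the Vandermonde/partial-fraction decomposition into the $\alpha$-components clean, so I would emphasize that point; if $q_i$ had a repeated root one would need generalized eigenvectors and factors of $n$, which the hypothesis rules out.
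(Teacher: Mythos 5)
Your proposal is correct and follows essentially the same route as the paper: the paper's proof simply invokes Perrin-Riou's Lemme~5.3 (via \cite{Kim}, Lemma~4.26), and your argument — pushing the norm relation (\ref{Takgu}) through $\alpha_n'\circ\beta_n$, solving the resulting recursion via the (separable) roots of $q_i$ with a companion-matrix/Vandermonde diagonalization, and absorbing all bounded errors into the fixed constant $c$ — is precisely the content of that cited lemma, as the paper's own example after Definition~\ref{Madrid} illustrates. Deferring the growth estimate that places $f_{\alpha,i}$ in $\Lambda_\alpha^s$ to Perrin-Riou is exactly what the paper does as well.
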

\begin{proof} As in \cite{Kim}~Lemma~4.26, this is due to \cite{Perrin-Riou}~Lemme~5.3.
\end{proof}

%%%

\vspace{2mm}

%%%%%%%%%%%%%%%%%%%%%%%

\begin{definition}
\Label{Madrid}
First, we choose a generator $g_{tor}\in \Lambda$ of the characteristic ideal of $(\Selr(\bfA/HF_{\infty})^{\vee})_{\Lambda-torsion}$. Now, recall $q_i(x)$ associated to $\{ Q_{i,n} \}_n$ (see the discussion before Definition~\ref{Relaxed Selmer}). We choose a root $\alpha_i$ of each $q_i(x)$.

Then, we define
\[ \bfL_{\{ \alpha_k \}} \stackrel{def}= g_{tor}\times \det [f_{\alpha_1, 1}, \cdots,  f_{\alpha_s, s}].\]
\end{definition}

\vspace{2mm}

\begin{remark}
Readers may wonder why we do not define $\bfL_{\alpha}$ for a single zero $\alpha$ of $q(x)=\prod_i q_i(x)$ as the second author did in \cite{Kim}. That is because if we choose a single zero $\alpha$ of one specific polynomial $q_i(x)$, and define $\bfL_{\alpha}$ in the manner of \cite{Kim}, then it has the effect of negating all the local points $\{ Q_{j,n} \}_n$ with $q_j\not= q_i$. The next example will illustrate what we mean.

This was not an issue in \cite{Kim} because it assumed that the abelian variety was one-dimensional, and therefore, there was only one polynomial $q_1(x)$.

\end{remark}

\vspace{2mm}

\begin{example}
Suppose we have two sequences of points $\{ Q_n \}_n, \{ Q_n'\}_n$ satisfying $N_{n+1/n}Q_{n+1}=Q_n$, and $N_{n+1/n}Q_{n+1}'=2Q_n'$. Define $R_n, R_n' \in \Zp[\Gamma_n]^2$ as above. Then, their corresponding polynomials (again, as above) are $x-1$ and $x-2$, thus the roots are $1,2$.

Note that $q(x)=(x-1)(x-2)=x^2-3x+2$, and we observe,

\begin{multline*}N_{n+2/n} Q_{n+2}-3N_{n+1/n} Q_{n+1}+2Q_n	\\
= N_{n+1/n}(N_{n+2/n+1} Q_{n+2}-Q_{n+1})-2(N_{n+1/n} Q_{n+1}-Q_n)=0,
\end{multline*}
and similarly

\begin{eqnarray*}N_{n+2/n} Q_{n+2}'-3N_{n+1/n} Q_{n+1}'+2Q_n'	=0,
\end{eqnarray*}
thus

\[ 
\begin{bmatrix} R_{n+2}^t	\\	
R_{n+1}^t		
\end{bmatrix}
= \begin{bmatrix}	3	&	-2\\
1	&	0
\end{bmatrix}
\begin{bmatrix}	R_{n+1}^t	\\
R_n^t
\end{bmatrix} \pmod{ (1+x)^{p^n}-1},
\]

\[  
\begin{bmatrix} {R_{n+2}'	}^t\\	
{R_{n+1}'	}^t	
\end{bmatrix}
= \begin{bmatrix}	3	&	-2\\
1	&	0
\end{bmatrix}
\begin{bmatrix}	{R_{n+1}'}^t	\\
{R_n'}^t
\end{bmatrix} \pmod{ (1+x)^{p^n}-1}.
\]
Since $\begin{bmatrix}	3	&	-2\\		1	&	0	\end{bmatrix}= \begin{bmatrix} 1&2\\ 1&1 \end{bmatrix} \begin{bmatrix} 1&0 \\ 0&2 \end{bmatrix} \begin{bmatrix} 1&2\\ 1&1 \end{bmatrix}^{-1},$ by the method of \cite[Lemme~5.3]{Perrin-Riou}, we have

\begin{eqnarray*} 
\begin{bmatrix} f_1 \\ f_2		\end{bmatrix}
&=&	\varprojlim 
\begin{bmatrix}	1&2\\ 1&1	\end{bmatrix}^{-1} 
\begin{bmatrix}	R_{n+1}^t	\\	R_n^t	\end{bmatrix}		\\
&=&	\varprojlim
\begin{bmatrix}	-1&2	\\	1&-1	\end{bmatrix}
\begin{bmatrix}	R_{n+1}^t	\\	R_n^t	\end{bmatrix}		\\
&=&	\varprojlim
\begin{bmatrix}	-R_{n+1}^t	+2R_n^t	\pmod{ (1+x)^{p^n}-1}	\\		0	\end{bmatrix},
\end{eqnarray*}
and

\begin{eqnarray*} 
\begin{bmatrix} f_1' \\ f_2'		\end{bmatrix}
&=&	\varprojlim 
\begin{bmatrix}	1&2\\ 1&1	\end{bmatrix}^{-1} 
\begin{bmatrix}	{R_{n+1}'}^t	\\	{R_n'}^t	\end{bmatrix}		\\
&=&	\varprojlim
\begin{bmatrix}	0	\\	{R_{n+1}'}^t	-	{R_n'}^t	\pmod{ (1+x)^{p^n}-1}	\end{bmatrix}.
\end{eqnarray*}

Note that if we were to define $\bfL_{\alpha}$ as in \cite{Kim}, then $\bfL_{1}=g_{tor}\times \det [ f_1, f_1']=0$, $\bfL_{2}=g_{tor}\times \det [ f_2, f_2']=0$, which are not meaningful. 
\end{example}

\vspace{2mm}

Similarly with \cite[Proposition~4.29]{Kim}, the next proposition follows from \cite{Perrin-Riou}~Lemme~5.2, and the standard Iwasawa theory arguments, and this proves the first part of our main result, Theorem~\ref{Vienna}.

\vspace{2mm}

%%%%%%%%%%%%%%%%%%%%%%%

\begin{proposition}			\Label{ZeroGo}
Recall $s=\dim A \cdot [H\cdot F:\Q]$. If $\bfL_{\{ \alpha_k \}} \not=0$, then for some fixed $C$

\[ \corank_{\Zp} \Sel_p(\bfA'/H\cdot F_n) \leq \sum_{i=1}^s (p-1) \times \left( p^{n-1}+p^{n-2}+ \cdots+ p^{m_i} \right)+C\]
where $n-m_i = \lambda_i n +O(1)$ and $\lambda_i=v_p(\alpha_i)$.

\end{proposition}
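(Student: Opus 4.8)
The plan is to follow the strategy of \cite[Proposition~4.29]{Kim}, adapting it to the higher-dimensional setting where we now have $s = \dim A \cdot [H\cdot F:\Q]$ sequences of local points with (possibly distinct) norm relations encoded by the polynomials $q_1(x),\ldots,q_s(x)$. First I would set up the exact sequence~(\ref{Ice-Cream}) and the induced maps $\alpha_n'$, together with $\beta_n$ coming from local Tate duality, so that the classes $R_{i,n}\in\Lambda_n^s$ are available. The key input is the preceding proposition: each $R_{i,n}$ is congruent modulo $c^{-1}((T+1)^{p^n}-1)\Lambda$ to $\sum_{\alpha}f_{\alpha,i}\alpha^{n+1}$, where the sum runs over the roots of $q_i$ and $f_{\alpha,i}\in\Lambda_\alpha^s$; having fixed one root $\alpha_i$ of each $q_i$, I would isolate the contribution of $f_{\alpha_i,i}$ and estimate the ``error'' coming from the other roots, using that $\Lambda_{\alpha}$ controls growth by $|1/\alpha^n|$ in the disc of radius $|1/\sqrt[p^n]{p}|$.

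The heart of the argument is a rank/corank count. Writing $X = \Selr(\bfA'/H\cdot F_\infty)^\vee$, Assumption~\ref{Danube} gives $\rank_\Lambda X = s$, so $X/S_{tor}$ embeds in $\Lambda^s$ with finite cokernel. The Control Theorem for $\Selr$ reduces the computation of $\corank_{\Zp}\Sel_p(\bfA'/H\cdot F_n)$ to bounding the corank of the quotient of $\Selr(\bfA'/H\cdot F_n)^\vee$ by the subgroup generated by the images $R_{i,n}$ of the local points, up to the bounded-rank contributions from the ordinary points $Q_{i,n}^{\mfq,ord}$ (Proposition~\ref{Berlin}) and the Control Theorem kernels/cokernels. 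So the task becomes: bound $\corank_{\Zp}\bigl(\Lambda_n^s / \langle R_{1,n},\ldots,R_{s,n}\rangle\bigr)$. Here I would invoke \cite{Perrin-Riou}~Lemme~5.2 (exactly as in \cite{Kim}): the determinant $\det[f_{\alpha_1,1},\ldots,f_{\alpha_s,s}]$ is, up to $g_{tor}$, the quantity $\bfL_{\{\alpha_k\}}$, and the hypothesis $\bfL_{\{\alpha_k\}}\neq 0$ guarantees the $R_{i,n}$ are ``as independent as possible.'' The local growth rate of $f_{\alpha_i,i}\in\Lambda_{\alpha_i}$ is governed by $\lambda_i = v_p(\alpha_i)$; Perrin-Riou's lemma then yields that the number of $\Zp$-generators of the cokernel at level $n$ is at most $(p-1)(p^{n-1}+\cdots+p^{m_i})+O(1)$ for the $i$-th slot, with $n - m_i = \lambda_i n + O(1)$. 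Summing over $i=1,\ldots,s$ gives the stated bound, with $g_{tor}$ absorbing into the constant $C$ (its contribution is bounded because $\bfL_{\{\alpha_k\}}\neq 0$ forces $g_{tor}\neq 0$, hence $S_{tor}$ has finite corank at each level up to $O(1)$).

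I would then assemble the pieces: (i) Control Theorem for $\Selr$ replaces $\Sel_p$ at finite level by $\Selr$ at finite level up to bounded error; (ii) the defining sequence $0\to\Selr(\bfA'/H\cdot F)^\vee \to H^1$-type objects $\to \prod_\mfq(\text{local terms})$, combined with $\beta_n$, shows that the difference between $\corank\Sel_p$ and $\corank\Selr$ at level $n$ is accounted for, modulo $O(1)$, by the image of the local points $\{Q_{i,n}\}_i$ --- of which the $d_1[(HF):\Q]$ ordinary ones contribute only $O(1)$ by Proposition~\ref{Berlin}, while the non-ordinary ones contribute the main term via their norm relations (\ref{Takgu}); (iii) the Perrin-Riou estimate above for the $s$ slots. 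The main obstacle I anticipate is bookkeeping rather than conceptual: correctly tracking how the distinct polynomials $q_i(x)$ --- which may have different degrees $e_i$ and different root valuations $\lambda_i$ --- interact in the single $s\times s$ determinant, and checking that choosing one root $\alpha_i$ per $q_i$ (rather than, say, a single root of $\prod q_i$, which as the Example shows would kill everything) really does produce a matrix whose nonvanishing determinant controls all $s$ slots simultaneously. Verifying that the error terms from the non-chosen roots stay within the claimed $p^{m_i}$-threshold, uniformly in $n$, is the one place where I would need to be careful; everything else is the standard Iwasawa-theoretic descent as in \cite{Kim} and \cite{Perrin-Riou}.
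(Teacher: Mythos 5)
Your proposal follows essentially the same route as the paper, which itself proves this proposition only by invoking \cite[Proposition~4.29]{Kim}, \cite{Perrin-Riou}~Lemme~5.2, and standard Iwasawa-theoretic descent through the relaxed Selmer group, the control theorem, local Tate duality, and the congruences for the $R_{i,n}$ — exactly the ingredients you assemble, including the correct observation that one root $\alpha_i$ must be chosen per $q_i$ so that the nonvanishing of the $s\times s$ determinant $\bfL_{\{\alpha_k\}}$ controls all slots. Your outline is consistent with, and somewhat more detailed than, the paper's own argument.
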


\vspace{2mm}

%%%%%%%%%%%%%%%%%%%%%%%
Clearly, $\rank A'(H\cdot F_n)$ is not greater than $\corank_{\Zp} \Sel_p(\bfA'/H\cdot F_n)$ (and equal to it if the Shafarevich-Tate group is finite).

Also, as noted in \cite{Kim}, $\bfL_{\{ \alpha_k \}} \not=0$ if $\Sel_p(\bfA'/H\cdot F_n)^{\chi}$ is finite for any $n$ and any character $\chi$ of $\Gal(H\cdot F_n/H\cdot F)$.

\end{section}

%%%%%%%%%%%%%%%%%%%%%

\begin{section}{Certain hyperelliptic curves and their Jacobians}

In this section we consider the following types of hyperelliptic curves and investigate the bounds of ranks of their Jacobian varieties. We study the following setup which has a certain Iwasawa theory flavor. Let $p>3$ be a prime. For positive integers $m$ and $n$, let
$$C_n:y^2=x^{3p^n}+ax^{p^n}+b,$$ and
$$C_{m,n} : y^{2p^m}=x^{3p^n}+ax^{p^n}+b.$$
($C_{m,n}$ is not a hyperelliptic curve. For convenience, we will call it a ramified hyperelliptic curve.)
We compute the genera of $C_n$ and $ C_{m,n}$ which are the dimensions of their jacobian varieties.

\begin{lem} 
\Label{Tripoli}
Denoting the genus of a curve $C$ by $g_C$,
$$g_{C_n}=\frac{3p^n-1}{2},$$ and
$$g_{C_{m,n}}=\begin{cases}1-2p^m+\frac{3p^n(2p^m-1)+p^m}{2}, &\text{ if } m>n\\
1-2p^m+\frac{3p^n(2p^m-1)+2p^m-p^n}{2}, &\text{ if } m\leq n.\end{cases}$$
\end{lem}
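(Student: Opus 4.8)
The plan is to compute the genera via the Riemann--Hurwitz formula applied to suitable maps to $\mathbb{P}^1$, or more directly to known genus formulas for superelliptic curves.

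\medskip

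\textbf{Step 1: The curve $C_n$.}
First I would observe that $C_n: y^2 = x^{3p^n} + a x^{p^n} + b$ is a hyperelliptic curve of the form $y^2 = h(x)$ with $h(x) = x^{3p^n} + a x^{p^n} + b$ a polynomial of degree $3p^n$. I would first check that $h$ is separable (for generic $a,b$, or we may simply assume this; the statement implicitly assumes a smooth model), so that the affine curve is smooth. For a hyperelliptic curve $y^2 = h(x)$ with $\deg h = D$, the genus is $\lfloor (D-1)/2 \rfloor$. Since $D = 3p^n$ is odd (as $p > 3$ is odd), we get $g_{C_n} = (3p^n - 1)/2$, which is exactly the claimed formula. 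Alternatively, apply Riemann--Hurwitz to the degree-$2$ map $(x,y) \mapsto x$: it is ramified at the $3p^n$ roots of $h$ and, because $\deg h$ is odd, at the point at infinity as well, giving $3p^n + 1$ ramification points each with ramification index $2$, so $2g_{C_n} - 2 = 2(-2) + (3p^n+1)$, hence $g_{C_n} = (3p^n-1)/2$.

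\medskip

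\textbf{Step 2: The curve $C_{m,n}$.}
The curve $C_{m,n}: y^{2p^m} = x^{3p^n} + a x^{p^n} + b$ is a superelliptic (cyclic) curve $y^q = g(x)$ with $q = 2p^m$ and $g(x) = x^{3p^n} + a x^{p^n} + b$ of degree $D = 3p^n$. Here the main subtlety is that $g$ is \emph{not} separable as written: $g(x) = \tilde{g}(x^{p^n})$ where $\tilde{g}(t) = t^3 + at + b$. So I would first factor out the $p^n$-th power structure. Writing $g(x) = \prod_{\text{roots } r \text{ of }\tilde g}(x^{p^n} - r)$ and noting each factor $x^{p^n} - r$ is itself a $p^n$-th power up to the purely inseparable/Frobenius issue is the wrong track in characteristic $0$; rather over $\mathbb{Q}$ (or $\overline{\mathbb{Q}}$) the polynomial $x^{p^n} - r$ has $p^n$ distinct roots. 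Thus $g(x)$ has, generically, $3p^n$ distinct roots, each a simple root, \emph{unless} two of the three roots $r$ of $\tilde g$ differ by a $p^n$-th power in a way forcing a collision — which I would rule out generically. So the correct statement is that $g$ \emph{is} separable with $3p^n$ simple roots. Then I apply the genus formula for $y^q = g(x)$: using the map $(x,y)\mapsto x$ of degree $q = 2p^m$, compute ramification over each of the $3p^n$ roots of $g$ (ramification index $q$, since $\gcd(q, 1) = 1$ — wait, the local behaviour at a simple root gives a single point with index $q$) and over $x = \infty$ (where the index is $q/\gcd(q, D) = q/\gcd(2p^m, 3p^n)$). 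Riemann--Hurwitz then gives
\[
2g_{C_{m,n}} - 2 = q(2 \cdot 0 - 2) + 3p^n(q-1) + \big(\gcd(q,D) - 1\big)\cdot\frac{q}{\gcd(q,D)}\cdot\frac{1}{\cdots}
\]
— more carefully: over infinity there are $\gcd(q,D)$ points each with index $q/\gcd(q,D)$, contributing $\gcd(q,D)\big(q/\gcd(q,D) - 1\big)$ to the ramification sum. With $\gcd(2p^m, 3p^n) = 2p^{\min(m,n)}$ when... no: $\gcd(2p^m, 3p^n) = \gcd(2,3)\cdot p^{\min(m,n)} = p^{\min(m,n)}$ since $\gcd(2,3)=1$. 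Plugging $q = 2p^m$, $D = 3p^n$, $\gcd = p^{\min(m,n)}$ and simplifying should reproduce the two cases $m > n$ and $m \le n$.

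\medskip

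\textbf{Main obstacle and final assembly.}
The main obstacle is getting the ramification at infinity exactly right: one must pass to a smooth projective model of $y^{2p^m} = g(x)$, identify the points lying over $x = \infty$, and determine their ramification indices as $q/\gcd(q,\deg g)$ — a standard but error-prone computation for superelliptic curves, complicated here by the fact that both $q$ and $\deg g$ carry powers of $p$. I would double-check by specializing to $m = 0$ (recovering $C_n$ up to the $y \mapsto y$ relabeling, i.e. $q = 2$) and to $n = 0$ (recovering the classical curve $y^{2p^m} = x^3 + ax + b$), verifying the formula reduces correctly; in particular at $m=0$ the formula's ``$m \le n$'' branch should give $1 - 2 + (3p^n(2-1) + 2 - p^n)/2 = (3p^n - 1)/2 = g_{C_n}$, which it does, providing a good consistency check. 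Once the ramification data is pinned down, the two cases emerge purely from whether $\min(m,n) = n$ (so $\gcd = p^n$, giving $\gcd(q,D) = p^n$ points at infinity) or $\min(m,n) = m$ (so $p^m$ points at infinity), and the final formulas follow by routine algebraic simplification of the Riemann--Hurwitz identity $2g - 2 = -2q + 3p^n(q-1) + (\gcd(q,D)-1)(q/\gcd(q,D))\cdot\gcd(q,D)/\cdots$ — which I would carry out cleanly rather than reproduce here.
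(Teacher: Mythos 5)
Your method for both curves is the same as the paper's: Riemann--Hurwitz applied to the degree-$2$ map for $C_n$, and to the degree-$2p^m$ map $(x,y)\mapsto x$ for $C_{m,n}$, with total ramification ($e=2p^m$) at the $3p^n$ simple zeros of $x^{3p^n}+ax^{p^n}+b$ and with $\gcd(2p^m,3p^n)=p^{\min(m,n)}$ points over $x=\infty$, each of index $2p^{m-\min(m,n)}$. That ramification analysis is correct (granting, as the paper also implicitly does, that $t^3+at+b$ is separable and $b\neq 0$, so the right-hand side has $3p^n$ simple roots), and it is exactly the computation in the paper's proof.

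However, your ``consistency check'' at $m=0$ is miscomputed, and the slip hides a real discrepancy. The branch of the displayed formula labelled ``$m\le n$'' gives, at $m=0$, $1-2+\frac{3p^n+2-p^n}{2}=p^n$, not $\frac{3p^n-1}{2}$; it is the other branch, $1-2p^m+\frac{3p^n(2p^m-1)+p^m}{2}$, that specializes to $\frac{3p^n-1}{2}$. Indeed, finishing your own $\gcd$ analysis gives the contribution $p^m$ at infinity when $m\le n$ ($p^m$ points of index $2$) and $2p^m-p^n$ when $m>n$ ($p^n$ points of index $2p^{m-n}$), so the $+p^m$ formula belongs to the case $m\le n$ and the $+2p^m-p^n$ formula to $m>n$. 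This agrees with the computation written in the paper's proof, but it attaches the formulas to the opposite cases from those displayed in the Lemma as stated (the two cases in the statement appear to be interchanged: e.g.\ for $n=0$, $m\ge 1$ the genus of $y^{2p^m}=x^3+ax+b$ is $2p^m-1$, which only the $2p^m-p^n$ branch produces). So carry the ``routine simplification'' out explicitly and record which case each formula belongs to; the claim ``which it does'' in your check is an arithmetic error that would otherwise have caught this.
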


\begin{proof}  By the Riemann-Hurwitz formula, for $C_n$, the degree of $C_n$ to $\mathbb{P}^1$ is $2$ and the ramification index $e$ is $2$ at $3p^n+1$ points including at infinity. Hence it satisfies that
$$2g_{C_n}-2=2(-2)+(3p^n+1).$$

For $C_{m,n}$,  the degree of $C_{m,n}$ to $\mathbb{P}^1$ is $2p^m$.
If $m \le  n$, there are $3p^n$ points not at infinity and they have ramification index $e=2p^m$. And the ramification at infinity has the index $e=2$.   So
$$2g_{C_{m,n}}-2=2p^m(-2)+(3p^n)(2p^m-1)+p^m.$$

If $m > n$, similarly there are $3p^n$ points not at infinity and they have ramification index $e=2p^m$. And the ramification at infinity has the index $e = 2p^{m-n}$, so
$$2g-2=2p^m(-2)+(3p^n)(2p^m-1)+2p^m - p^n.$$
\end{proof}

Next we consider the endomorphism ring $\End(A)$ of an abelian variety $A$.

An abelian variety is isogenous to a product of simple abelian varieties. If $A$ is a simple abelian variety, then $\End(A)\otimes_{\Z}\Q$ is a division ring. So, if $A=A_1^{n_1}\times \cdots \times A_k^{n_k}$, where $A_k$'s are simple and not isogenous to each other, then
$$\End(A)\otimes_{\Z}\Q=M_{n_1}(D_1)\oplus\cdots \oplus M_{n_k}(D_k),$$
for some division algebras $D_i$.

This only gives a rough upper bound for the size of the endomorphism ring. However, in the case of the Jacobian varieties of $C_n$ and $C_{m,n}$, there is an obvious lower bound for the endomorphism rings. 

Suppose $C_n, C_{m,n}$ are defined over $\Q(\zeta_{p^n})$ and $\Q(\zeta_{p^m},\zeta_{p^n})$ respectively.
Let $J_n$ (defined over $\Q(\zeta_{p^n})$) be the Jacobian abelian variety of $C_n$, and $J_{m,n}$ (defined over $\Q(\zeta_{p^m}, \zeta_{p^n})$) be the Jacobian abelian variety of $C_{m,n}$.

Then, $\End_{\Q(\zeta_{p^n})} J_n$ contains $\Z[\zeta_{p^n}]$ induced by the automorphisms of $C_n$ given by $(x,y)\mapsto (\zeta x, y)$ for any $\zeta$ satisfying $\zeta^{p^n}=1$, and $\End_{\Q(\zeta_{p^m}, \zeta_{p^n})} J_{m,n}$ contains $\Z[\zeta_{p^m}] \times \Z[\zeta_{p^n}]$ induced by the automorphisms $(x,y)\mapsto (\zeta x, \zeta' y)$ for any $\zeta$ and $\zeta'$ satisfying $\zeta^{p^n}=1$ and $\zeta'^{p^m}=1$.

%%%%%%%%%%%%%%%%%%%%%%%%%%%%

For the rest of the section, the abelian variety $A/F$ is either $J_n$ or $J_{m,n}$. If $A=J_n$, then the field $F$ is $\Q(\zeta_{p^n})$, and if $A=J_{m,n}$, then $F$ is $\Q(\zeta_{p^n}, \zeta_{p^m})$. In all cases, $A^\vee/F$ is the dual abelian variety of $A$.

Let $F_{\infty}=\Q(\zeta_{p^{\infty}})$.

\vspace{2mm}

\begin{assumption}		\Label{Liaodong}
$A/F$ has good reduction at the (unique) prime $\mathfrak p$ of $F$ above $p$.
\end{assumption}

\vspace{2mm}

At the moment, we do not know whether Assumption~\ref{Liaodong} is true for $J_n$, $J_{m,n}$, or their dual abelian varieties in general. We can only hope that it is often true. As mentioned earlier, we expect that the final result is probably true regardless. We hope that a proper theory for abelian varieties with bad reduction (especially unstable reduction) will be developed soon.

We let $H$ be a number field so that every prime of $H$ above $p$ is unramified over $H/\Q$. Some choice of $H$ might be more illuminating than others. For example, we may choose $H$ so that $A$ has ``complex multiplication'' over $H$: For instance, we may choose $H=\Q(i)$ when $p$ is an odd prime, and $b=0$.

\vspace{2mm}

\begin{definition}
Recall that $A^{\vee}$ is the dual abelian variety of $A$. Also recall

\[ \mfA= \cup A[p^n], \quad \mfA'=\cup A^\vee[p^n], \] where $A^\vee $ is the dual variety  of $A$.
(In other words, $\mfA'=\Hom(\mfA, \Zp(1))$.)

Also, recall $\Gamma=\Gal(H \cdot F_{\infty}/H \cdot F)$, and $\Lambda=\Zp[[\Gamma]]$ which we identify with $\Zp[[x]]$.
\end{definition}

\vspace{2mm}

\begin{assumption}			
\Label{Tyrol}

Recall the relative Selmer group $\Selr$ from Section~\ref{Turin}. We assume $\Selr(\bfA'/H \cdot F_{\infty})$ has $\Lambda$-corank $s=\dim A \cdot [H \cdot F:\Q]$.

\end{assumption}

\vspace{2mm}

As mentioned in Section~\ref{Turin}, Assumption~\ref{Tyrol} is true if $\Sel_p(\bfA/H\cdot F_n)^{\chi}$ is finite for any $n\geq 0$ and any character $\chi$ of $\Gal(H\cdot F_n / H\cdot F)$.

As in the discussion after Notation~\ref{AFG}, for each prime $\mfp$ of $F$ above $p$, define the Dieudonne module $M_{\mfp}$ associated to $A/F_{\mfp}$. Also, as we did in the same discussion, construct local points $\{ Q_{i,n}\}_n$ for $i=1,\ldots, \dim A \cdot [HF:\Q]$, each associated to a monic irreducible polynomial $q_i(x) \in \Zp[x]$.

The polynomials $q_1,\ldots, q_{s}$ depend on the choice of the above-mentioned local points. However, as we have demonstrated earlier, all of them are irreducible divisors of $\det (x\cdot 1-\bfF|M_{\mfp})$ for some $\mfp$, and every irreducible (polynomial) divisor of $\det (x\cdot 1-\bfF|M_{\mfp})$ of any $\mfp$ is represented by them.

Then, by Proposition~\ref{ZeroGo} and Lemma~\ref{Tripoli}, we obtain Theorem~\ref{Vienna}.

%%%%%%%%%%%%%%%%%%%%%%%%%%

\end{section}

\end{nouppercase}

%%%%%%%%%%%%%%%%%%%%%%%%5

\end{document}